\newtheorem{thm}{Theorem}
\newtheorem{conj}[thm]{Conjecture}
\newtheorem*{fact}{Fact}
\newtheorem{prop}[thm]{Proposition}
\newtheorem{cor}{Corollary}[thm]
\theoremstyle{definition}
\newtheorem*{dfn*}{Definition}
\newtheorem*{notation*}{Notation}
\newtheorem*{rmk*}{Remark}
\newcommand{\Z}{\mathbb{Z}}
\newcommand{\Q}{\mathbb{Q}}
\newcommand{\C}{\mathbb{C}}
\newcommand{\CP}{\mathbb{C}P}
\DeclareMathOperator{\Id}{Id}
\DeclareMathOperator{\rk}{rk}
\DeclareMathOperator{\lk}{lk}
\DeclareMathOperator{\Torsion}{Torsion}
\title{Surgeries on Iterated Torus Knots Bounding Rational Homology 4-Balls}
\date{May 2023}
\author{Lisa Lokteva\\ University of Glasgow \\ \href{mailto:e.lokteva.1@research.gla.ac.uk}{e.lokteva.1@research.gla.ac.uk}}
\begin{document}
\maketitle
\begin{abstract}
    We show that all large enough positive integral surgeries on algebraic knots bound a 4-manifold with a negative definite plumbing tree, which we describe explicitly. Then we apply the lattice embedding obstruction coming from Donaldson's Theorem to classify the ones of the form $S^3_n(T(p_1,k_1p_1+1; p_2, k_2p_2\pm 1))$ that also bound rational homology 4-balls.
\end{abstract}

\tableofcontents

\cfoot{\thepage\ of \pageref{LastPage}}

\section{Introduction}

\begin{dfn*}
    For a manifold $M$, we say that a manifold $N$ is a \textbf{rational homology $M$} if $M$ and $N$ are of the same dimension and $H_*(M; \Q) \cong H_*(N; \Q)$.
\end{dfn*}
One major problem in low-dimensional topology is to determine which rational homology 3-spheres bound rational homology 4-balls. It is attributed to Casson and appears as Problem 4.5 on Kirby's list of important problems in the discipline \cite{kirbylist}. While rational homology 3-spheres abound, very few of them tend to bound rational homology balls. This can be illustrated by the fact that while the $n$-surgery on a knot $K \subset S^3$, denoted $S^3_n(K)$, is a rational homology 3-sphere for all $n\neq 0$ and knots $K$, Aceto and Golla showed in \cite[Theorem 1.2]{acetogolla} that in fact, for each $K$, there are at most four possible integer values of $n$ such that $S^3_n(K)$ bounds a rational homology ball.

The first study of rational homology 3-spheres bounding rational homology 4-balls was published in 1981, when Casson and Harer found several families of homology lens spaces bounding rational homology 4-balls and homology 3-spheres bounding contractible manifolds \cite{casson-harer}. In 2007, Lisca classified all the lens spaces and connected sums of lens spaces bounding rational homology 4-balls \cite{liscasingle07, liscamultiple07}, popularising the technique of obstructing bounding rational homology 4-balls with lattice embeddings. Many people have since then used lattice embeddings on various classes of 3-manifolds to classify the ones that admit fillings with certain homological constraints. Examples include Lecuona's study of double branched covers of $S^3$ branched over some families of Montesinos knots \cite{lecuonamontesinos}, Aceto's study of rational homology $S^1\times S^2$'s bounding rational homology $S^1\times D^3$ \cite{aceto20}, and Simone's classifying torus bundles on the circle bounding rational homology $S^1\times D^3$ \cite{simone2020classification}, which he used to construct rational homology 3-spheres bounding rational homology 4-balls in \cite{simone2021using}. Recently, Aceto, Golla, Larson and Lecuona managed to answer the rationally acyclic filling question for positive integral surgeries on positive torus knots, a classification with a whopping 18 cases \cite{acetogolla, GALL}. 

The idea of this fruitful technique called lattice embeddings is to represent the rational homology 3-sphere as the boundary of a negative definite 4-manifold and to use the following corollary of Donaldson's theorem \cite[Theorem 1]{donaldsonthm}:

\begin{prop}
\label{donaldson}
Let $Y$ be a rational homology 3-sphere and $Y=\partial X$ for $X$ a connected smooth oriented negative definite 4-manifold. If $Y=\partial W$ for a smooth rational homology 4-ball $W$, then there exists a lattice embedding $(H_2(X)/\Torsion, Q_{X}) \hookrightarrow (\Z^{\rk H_2(X)}, -\Id)$.
\end{prop}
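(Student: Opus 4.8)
The plan is to manufacture a closed smooth $4$-manifold from $X$ and $W$, show it is negative definite, and then feed it to Donaldson's theorem. Concretely, I would set $Z := X \cup_Y (-W)$, the closed oriented smooth $4$-manifold obtained by gluing $X$ to $-W$ (that is, $W$ with reversed orientation) along their common boundary $Y$, smoothing the join using collar neighbourhoods. Since $X$ and $W$ are connected and $Y$, being a rational homology $3$-sphere, is connected, $Z$ is a closed connected oriented smooth $4$-manifold, and its intersection form is the object to which Donaldson's diagonalisation applies.

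The next step is to pin down $H_2(Z)$ rationally. I would run the Mayer--Vietoris sequence
\[
H_2(Y;\Q) \longrightarrow H_2(X;\Q) \oplus H_2(W;\Q) \longrightarrow H_2(Z;\Q) \longrightarrow H_1(Y;\Q).
\]
Because $Y$ is a rational homology $3$-sphere, $H_2(Y;\Q) = H_1(Y;\Q) = 0$, and because $W$ is a rational homology $4$-ball, $H_2(W;\Q) = 0$; hence the inclusion $i\colon X \hookrightarrow Z$ induces an isomorphism $H_2(X;\Q) \to H_2(Z;\Q)$. In particular $b_2(Z) = b_2(X) = \rk H_2(X)$, which identifies the rank of the target lattice in the statement. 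It also follows that the induced map $H_2(X)/\Torsion \to H_2(Z)/\Torsion$ is injective (its kernel is torsion, as it dies after tensoring with $\Q$) with full-rank, hence finite-index, image.

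The delicate point — and the step I expect to require the most care — is verifying that $i_*$ intertwines the intersection forms, i.e.\ $Q_Z(i_*a,i_*b) = Q_X(a,b)$ for all $a,b \in H_2(X)$. This can be seen geometrically: representing $a$ and $b$ by closed oriented surfaces pushed into the interior of $X$, their algebraic intersection number is computed locally and is unaffected by viewing the surfaces inside $Z$; alternatively one argues via the long exact sequence of the pair $(Z,X)$ together with Poincaré--Lefschetz duality, taking a little care with torsion. Combined with the previous paragraph, $i_*$ therefore realises $(H_2(X)/\Torsion, Q_X)$ as a finite-index sublattice of $(H_2(Z)/\Torsion, Q_Z)$. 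Since definiteness of an integral form is detected after $\otimes\, \Q$ and $Q_X$ is negative definite, $Q_Z$ is negative definite as well.

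Finally, I would invoke Donaldson's theorem \cite[Theorem 1]{donaldsonthm} for the closed negative definite smooth $4$-manifold $Z$: its intersection form is diagonalisable, i.e.\ there is an isometry $(H_2(Z)/\Torsion, Q_Z) \cong (\Z^{b_2(Z)}, -\Id)$. Composing the lattice embedding $(H_2(X)/\Torsion, Q_X) \hookrightarrow (H_2(Z)/\Torsion, Q_Z)$ with this isometry, and using $b_2(Z) = \rk H_2(X)$, produces the asserted embedding $(H_2(X)/\Torsion, Q_X) \hookrightarrow (\Z^{\rk H_2(X)}, -\Id)$, completing the argument.
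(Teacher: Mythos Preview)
Your argument is correct and is exactly the standard proof that the paper is implicitly invoking by calling this a ``corollary of Donaldson's theorem'' \cite[Theorem~1]{donaldsonthm}; the paper does not spell out a proof of its own beyond that citation. The gluing $Z = X \cup_Y (-W)$, the Mayer--Vietoris computation showing $b_2(Z)=b_2(X)$ and that $i_*$ is a finite-index isometric inclusion, and the final appeal to Donaldson's diagonalisation are precisely the expected steps.
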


Here $Q_X$ is the intersection form of $X$, and lattice embeddings are defined in Section \ref{latticeembeddings}. Proposition \ref{donaldson} also has a positive version, where $X$ is positive definite and the embedding goes into $(\Z^{\rk H_2(X)}, \Id)$.

The author is trying to build on the works of Lisca, Lecuona, Aceto, Golla and Larson and classify the positive surgeries on iterated torus knots bounding rational homology balls. An iterated torus knot is a knot obtained from the unknot through repeated cabling operations.

\begin{dfn*}
    Let $K \subset S^3$ be an oriented smooth knot. The boundary $\partial (\nu K)$ of a tubular neighbourhood $\nu K$ of $K$ is an embedded torus in $S^3$. The \textbf{meridian} $M$ and the \textbf{longitude} $L$ are oriented simple closed curves inside $\partial (\nu K)$, determined up to isotopy by the following homology and linking relations: \begin{itemize}
        \item $[M]=0$ and $[L]=[K]$ in $H_1(\nu K)$, and
        \item $\lk(M, K)=1$ and $\lk(L,K)=0$.
    \end{itemize}
    
    Let $p,q$ be relatively prime integers. We denote by $C_{p,q}(K) \subset S^3$, the unique (up to isotopy) simple closed curve in $\partial (\nu K)$ with homology class $p[L]+q[M] \in H_1(\partial (\nu K))$. The curve $\C_{p,q}(K)$ is called the $(p,q)$-\textbf{cable} on $K$.
\end{dfn*}

\begin{dfn*}
    The \textbf{iterated torus knot} with $k$ iterations $T(p_1,\alpha_1;p_2,\alpha_2;\cdots;p_k,\alpha_k)$ is the knot \[T(p_1,\alpha_1;p_2,\alpha_2;\cdots;p_k,\alpha_k)=C_{p_k,\alpha_k}C_{p_{k-1}, \alpha_{k-1}}\cdots C_{p_1,\alpha_1}(O),\] $O$ being the unknot.
\end{dfn*}

Iterated torus knots are interesting to consider because many of them, just like positive torus knots, arise as links of cuspidal singularities of complex plane curves. We will call the iterated torus knots that do arise as singularity links of cuspidal curves \textbf{algebraic}. Resolving the singularity using blow-ups allows us to obtain a plumbing description of a 4-manifold with low $b_2^+$ (the number of positive eigenvalues of $Q_X$) bounding the surgery on the knot. Unfortunately, the author lacks the luxury of being able to push down $b_2^+$ to 0 as easily as for torus knots, which slightly restricts the $n$ for which we can answer the question whether or not $S^3_n(K)$ bounds a rational homology ball, excluding finitely many cases for each $K$ from our study. When we do have a negative definite filling of our 3-manifold, we need to investigate the existence of a lattice embedding prescribed by Proposition \ref{donaldson}. This can be a very difficult combinatorial problem. For example, the classification of positive integral surgeries on positive torus knots $T(p,q)$ bounding rational homology balls contains a lattice embedding analysis well over 40 pages long (\cite[Section 6]{GALL}), and this does not include the case when $q \equiv \pm 1 \pmod{p}$, which was studied in the earlier paper \cite{acetogolla}. Also, minimal changes of the intersection form can render former techniques for studying the lattice embedding useless. In this first paper on integral surgeries on iterated torus knots bounding rational homology balls, we restrict ourselves to algebraic iterated torus knots of the form $T(p_1,k_1p_1+1; p_2, k_2p_2\pm 1)$. We prove the following theorem:


\begin{thm}
\label{mainthm}
Let $\alpha_1 \equiv 1 \pmod{p_1}$, $\alpha_2 \equiv \pm 1 \pmod{p_2}$, $\alpha_2/p_2 > p_1\alpha_1$ and $n \geq 2+p_2\alpha_2$. Then the rational homology 3-sphere $S^3_n(T(p_1,\alpha_1; p_2, \alpha_2))$ bounds a rational homology 4-ball if and only if the tuple $(p_1, \alpha_1; p_2, \alpha_2; n)$ is one of the following:
\begin{enumerate}
    \item $(p_1,p_1+1; p_2, p_2(p_1+1)^2-1; p_2^2(p_1+1)^2)$ or
    \item $(2,7; p_2, 16p_2-1; 16p_2^2)$.
\end{enumerate}
\end{thm}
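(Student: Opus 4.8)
The plan is to combine the explicit negative-definite plumbing constructed in the previous sections with the lattice-embedding obstruction of Proposition~\ref{donaldson}, carry out the (long) combinatorial analysis of when the resulting embedding can exist, and show that the two families in the statement are exactly the survivors; for those two I then exhibit rational homology balls by hand. First I recall from the earlier sections the negative-definite plumbing graph $\Gamma=\Gamma(p_1,\alpha_1;p_2,\alpha_2;n)$, valid in the stated parameter range, with $\partial X_{\Gamma}=S^3_n(T(p_1,\alpha_1;p_2,\alpha_2))$. Its shape is a tree consisting of a bounded ``core'' (a couple of branch vertices together with short chains coming from $p_1$, $p_2$ and the cabling exponents) with several linear ``arms'' whose lengths grow with the parameters. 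Since $Y=S^3_n(T(p_1,\alpha_1;p_2,\alpha_2))$ is a rational homology sphere, $Q_{\Gamma}$ is non-degenerate with $\det(-Q_{\Gamma})=|H_1(Y)|=n$, and $\rk H_2(X_{\Gamma})=N$ for an explicit $N=N(p_1,\alpha_1;p_2,\alpha_2;n)$; moreover $H_2(X_\Gamma)$ is free since $\Gamma$ is a tree.

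For the ``only if'' direction, suppose $Y$ bounds a rational homology $4$-ball. Then Proposition~\ref{donaldson} yields a full-rank lattice embedding $(H_2(X_\Gamma),Q_{\Gamma})\hookrightarrow(\Z^{N},-\Id)$, equivalently an isometric embedding of the positive-definite lattice $(\Z^{N},-Q_{\Gamma})$ into $(\Z^{N},\Id)$; concretely this produces vectors $\{v_i\}_{i\in V(\Gamma)}\subset\Z^{N}$ with $v_i\cdot v_i=-w_i\ (\ge 2)$, with $v_i\cdot v_j=1$ whenever $ij$ is an edge of $\Gamma$, and $v_i\cdot v_j=0$ otherwise. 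Taking determinants, $n=\det(-Q_\Gamma)$ must be a perfect square, which both families satisfy ($n=(p_2(p_1+1))^2$ in case (1) and $n=(4p_2)^2$ in case (2)). I would then run the standard rigidity analysis on $\{v_i\}$: a weight $-2$ vertex is forced to $\pm e_a\pm e_b$, so each maximal chain of $(-2)$-vertices is pinned (up to the obvious symmetries) into an $A_k$-configuration; the heavier vertices, and in particular the branch vertices and the $(-3)$'s, then have very little room, and one propagates these constraints from the leaves of each arm inward by induction on the arm length. The expected outcome is that, for all tuples other than the two listed, the propagated constraints become numerically inconsistent, so no embedding exists and $Y$ bounds no rational homology ball.

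For the ``if'' direction I would construct the rational homology $4$-balls directly. The knots in both families are satellites with companion $T(p_1,\alpha_1)$, and in both families $n/p_2^2$ is a perfect square ($(p_1+1)^2$ in case (1), $16$ in case (2)); using the behaviour of Dehn surgery along the cabling torus one reduces $Y$ to a Dehn filling assembled from the cable space and a Dehn surgery on $T(p_1,\alpha_1)$ that is already known, from the torus-knot classification, to bound a rational homology ball, and then one checks by Kirby calculus (blow-downs and handle slides on $X_\Gamma$) that the remaining graph-manifold piece glues up so that the total filling stays a rational homology ball. Case (2) is sporadic and may instead require an \emph{ad hoc} handle description.

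The hard part will be the combinatorial core of the ``only if'' direction. Because $\Gamma$ is a genuine tree with branch vertices, Lisca's classification of embeddings of linear lattices is not directly available, so the rigidity analysis must be carried out by hand; and since the arm lengths depend on up to five parameters, the case analysis has to be organised so as to run uniformly in those lengths rather than treating each length separately. Locating precisely the threshold at which the generically-impossible embedding becomes possible, and verifying that this threshold is described exactly by families (1) and (2), is where essentially all of the work lies.
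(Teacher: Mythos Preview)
Your high-level strategy is exactly the paper's: build the negative-definite plumbing $X_\Gamma$, invoke Proposition~\ref{donaldson}, and run a rigidity analysis on the lattice embedding by first pinning the $(-2)$-chains and then squeezing the remaining few vertices. However, what you have written is a plan, not a proof: the entire combinatorial core---which you correctly identify as ``where essentially all of the work lies''---is not carried out. The paper does carry it out, and its organisation is worth noting since it is what makes the case analysis tractable. First, the paper splits according to the sign in $\alpha_2\equiv\pm 1\pmod{p_2}$, because the two signs give genuinely different plumbing graphs (different shapes for Leg~2 and Torso~2). Second, it separates the \emph{super-algebraic} range $\alpha_2/p_2>p_1\alpha_1+1$ from the boundary case $\lceil\alpha_2/p_2\rceil=p_1\alpha_1+1$: in the former, the vertices of weight $<-2$ are pairwise non-adjacent, which is what allows the $(-2)$-chain rigidity (Proposition~\ref{minustwochains} and Corollary~\ref{unionofminus2chains}) to pin almost everything and reduce the problem to embedding just two or three leftover vertices $u,v,w$; in the boundary case two heavy vertices become adjacent and the argument must be redone. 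Within each subcase the analysis is then a short finite list of options for $w$ (typically three), followed by a small system of Diophantine equations in two unknowns $\lambda,\kappa$ whose only solutions are the listed families. Your proposal does not anticipate this organisation, and without it the ``propagate constraints from the leaves inward'' description is too vague to constitute a proof.

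For the ``if'' direction you are working harder than necessary. The paper does not build the rational homology balls by Kirby calculus or by cutting along the cabling torus; it simply cites \cite[Theorem~1.3]{GALL}, which already provides rational homology balls for both families (they are cables of torus knots whose relevant surgeries are known to bound). Your proposed reduction via the cable space is plausible but is neither needed nor carried out.
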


\begin{rmk*}
The condition $\alpha_2/p_2 > p_1\alpha_1$ is equivalent to the algebraicity of the knot, and $n \geq 2+p_2\alpha_2$ is needed in order for $S^3_n(T(p_1,\alpha_1; p_2, \alpha_2))$ to bound an H-shaped negative definite plumbing of disc bundles over spheres. The conditions $\alpha_1 \equiv 1 \pmod{p_1}$ and $\alpha_2 \equiv \pm 1 \pmod{p_2}$ are, analogously to the conditions of \cite{acetogolla}, there to simplify the lattice embedding analysis.
\end{rmk*}

It is interesting to compare this result to other work on surgeries on iterated torus knots bounding rational homology 4-balls. We have already seen examples of ones that do. One result in that vein is Theorem 1.3 of Aceto, Golla, Larson and Lecuona in \cite{GALL}, which given a surgery on a knot $K$ bounding a rational homology ball gives us surgeries on infinitely many of its cables bounding rational homology balls. Another work is of an algebro-geometric flavour. Bodnár classified in \cite{bodnar} all rational unicuspidal complex curves $C$ inside $\CP^2$ with two Newton pairs. This is relevant to us because the complement of a tubular neighbourhood of $C$, $\CP^2-\nu C$, is a rational homology 4-ball, and $\partial(\CP^2-\nu C)=\partial(\nu C)=S^3_{d^2}(K)$ for $K$ an iterated torus knot of two iterations and $d$ the degree of the curve. However, Theorem \ref{mainthm} is to the author's knowledge the first analysis that \textit{excludes} potential examples of surgeries on iterated torus knots bounding rational homology balls.

We note that only the ``only if" part of Theorem \ref{mainthm} is new, whereas the ``if" part follows from \cite[Theorem 1.3]{GALL}. (The reason only one of the two families of cables with positive surgeries bounding torus knots mentioned in \cite[Theorem 1.3]{GALL} appears is that the other family has surgery coefficient lower than $p_2\alpha_2$.) One may wonder if all surgeries on iterated torus knots that bound rational homology $4$-balls arise from \cite[Theorem 1.3]{GALL}, that is whether the following is true:

\begin{conj}
Suppose that $S^3_{n}(T(p_1,\alpha_1;p_2, \alpha_2))$ bounds a rational homology $B^4$. Then $p_2^2$ divides $n$, $S^3_{\frac{n}{p_2^2}}T(p_1, \alpha_1)$ bounds a rational homology ball, and $\alpha_2=\frac{n}{p_2} \pm 1$.
\end{conj}

Bodnár's examples \cite{bodnar} show that this is not true in general. However, the answer is unknown if we make the additional assumption that $n\geq 2+p_2\alpha_2$ and thus that $S^3_n(T(p_1,\alpha_1; p_2, \alpha_2))$ bounds a negative definite H-shaped plumbing of disc bundles over spheres. This is because only examples (iii) and (iv) in \cite[Theorem 3.1.1]{bodnar} give us an $S^3_n(T(p_1,\alpha_1; p_2, \alpha_2))$ satisfying the additional assumption, which both arise from \cite[Theorem 1.3]{GALL}. Bodnár's examples suggest that the finitely many integral surgery coefficients per knot for which the surgery does not bound a negative definite plumbing are the ones that are the most likely to give rise to a $3$-manifold that bounds a rational homology $4$-ball.

\subsection{Outline of Paper}
In Section \ref{latticeembeddings}, we give a brief introduction to lattice embeddings, directed at those new to the area, while establishing the notation and terminology. We also prove a basic proposition that we will use in Section \ref{latticeanalysissec}. In Section \ref{negdefsec}, we find plumbing diagrams for surgeries on algebraic iterated torus knots. In Section \ref{latticeanalysissec} we analyse which plumbing graphs admit lattice embeddings.

\subsection*{Acknowledgements}
Firstly, I would like to thank Ana G. Lecuona, my supervisor, for introducing me to Casson's problem, iterated torus knots and lattice embeddings. Secondly, I would also like to thank Marco Golla, who is not my supervisor, for always being there to answer my questions and expertly finding errors in my proofs. Thirdly, I would like to thank the anonymous referee for the helpful advice on the presentation. Last but not least, I would like to thank Laboratoire de Mathématiques Jean Leray in Nantes for hosting me in the spring of 2021 and providing me, despite the pandemic, with an office, mathematical discussions, friendly faces and happiness.

\subsection*{Competing Interests}
The author declares none.


\section{Preliminaries on Lattice Embeddings}
\label{latticeembeddings}

This section is to serve as a brief introduction to working with lattice embeddings. Recall Proposition \ref{donaldson}. In this paper, as well as many others, including \cite{aceto20, lecuonalisca11, liscasingle07, liscamultiple07,GALL}, we are working with $X$ a tree-shaped plumbing of disc bundles on spheres. Its second homology is the free abelian group $\Z\langle V_1,\dots,V_k \rangle $ on the vertices and the intersection form is \[ \langle V_i, V_j \rangle_{Q_{X}} =  \begin{cases}
    \text{weight of } V_i & \text{ if } i=j \\
    1 & \text{ if } V_i \text{ is adjacent to } V_j \\
    0 & \text{ otherwise.}
\end{cases} \] A \textbf{lattice embedding} $f: (H_2(X)/\Torsion, Q_{X}) \hookrightarrow (\Z^{\rk H_2(X)}, -\Id)$ is a homomorphism of abelian groups $f$ (by abuse of notation often called a linear map) such that $\langle V_i, V_j \rangle_{Q_{X}} = \langle f(V_i), f(V_j) \rangle_{-\Id}$. Sometimes we talk about lattice embeddings into other ranks, meaning that $f$ goes into $(\Z^r, -\Id)$ for some $r$ not necessarily equal to $\rk H_2(X).$ We denote $f(V_i)=v_i$. Usually, we mean $\langle \cdot , \cdot \rangle_{-\Id}$ when we write just $\langle \cdot , \cdot \rangle$. When we talk about \textbf{basis vectors}, we are referring to an orthonormal basis of $\Z^r$, that is the codomain. We say that a basis vector $e$ \textbf{hits} a vector $v_i$ if $\langle v_i, e \rangle \neq 0$. We say that a vector $w$ is \textbf{included} in $v$, or that $v$ \textbf{contains} $w$, if $v=w+u$ and there is no basis vector hitting both $w$ and $u$. We call a lattice embedding $f: (H_2(X)/\Torsion, Q_{X}) \to (\Z^r, -\Id)$ \textbf{essential} if every basis vector of the image hits some vertex.

\begin{figure}
    \centering
    \includegraphics{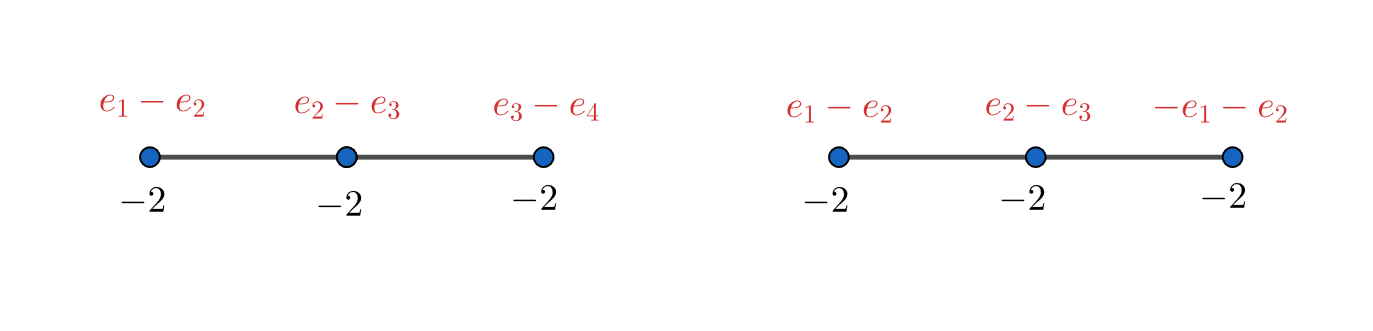}
    \caption{A $-2$-chain of length $3$ has two essential lattice embeddings, one into $(\Z^4, -\Id)$ and one into $(\Z^3, -\Id)$.}
    \label{fig:twoembeddings}
\end{figure}

When working with lattice embeddings of plumbing graphs, we often write the image of each vertex at the vertex, as for example in Figure \ref{fig:twoembeddings}. We consider two embeddings equivalent if they are the same up to signs and renaming of vertices, i.e. self-isometries of $(\Z^r, -\Id)$, and we only consider embeddings up to equivalence.

As an exercise in working with lattice embeddings, the reader is invited to prove the following standard fact:

\begin{prop}
\label{minustwochains}
A $-2$-chain of length $k \neq 3$ (that is a linear/path-shaped graph of length $k$ with all weights equal to $-2$) has a unique lattice embedding, essential into $(\Z^{k+1}, -\Id)$. A $-2$-chain of length $3$ has two lattice embeddings, shown in Figure \ref{fig:twoembeddings}.
\end{prop}

\begin{proof}[Proof sketch]
The left embedding in Figure \ref{fig:twoembeddings} easily generalises to an essential embedding of a $-2$-chain of length $k$ into $\Z^{k+1}$. Given an embedded graph, any subgraph has an induced embedding into some $(\Z^r, -\Id)$. Thus an embedding of $-2$-chain of length $k+1$ has to be an extension of the embedding of a $-2$-chain of length $k$. If $k\geq 3$, there is only one possible extension of an embedding like in the left part of Figure \ref{fig:twoembeddings}. The embedding in the right part cannot be extended at all.
\end{proof}

If the graphs $\Gamma_1$ and $\Gamma_2$ have embeddings into $(\Z^{k_1},-\Id)$ and $(\Z^{k_2},-\Id)$ respectively, then the disjoint union of the graphs has an induced embedding into $(\Z^{k_1+k_2}, -\Id)$, created by renaming the basis vectors of $\Z^{k_2}$ so that they are distinct from the basis vectors of $\Z^{k_1}$. The following corollary will be useful in Section \ref{latticeanalysissec}.

\begin{cor}
\label{unionofminus2chains}
An embedding of a disjoint union of $-2$-chains is, up to sign and renaming of vertices, a disjoint union of embeddings \begin{enumerate}
    \item $(e_1-e_2,...,e_k-e_{k+1})$,
    \item $(e_1-e_2,e_2-e_3,-e_1-e_2)$ and
    \item the embedding $(e_1-e_2),(e_1+e_2)$ of the two-component graph consisting of two disconnected vertices of weight $-2$.
\end{enumerate}
\end{cor}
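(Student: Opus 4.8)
The plan is to reduce the statement about a disjoint union of $-2$-chains to Proposition \ref{minustwochains}, which handles each connected component individually. So first I would take a lattice embedding $f$ of a disjoint union of $-2$-chains into $(\Z^r, -\Id)$, and by discarding any basis vectors not hit by any vertex, assume without loss of generality that $f$ is locally minimal. The key observation is that if $v$ is a vertex in component $A$ and $w$ is a vertex in component $B \neq A$, then $\langle v, w \rangle = 0$ since non-adjacent vertices are orthogonal; I want to upgrade this to say that $v$ and $w$ share no basis vectors, so that the components embed into orthogonal sublattices and the embedding genuinely splits as a disjoint union of embeddings of the individual chains.

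The main work is therefore proving this splitting: showing that distinct connected components occupy disjoint sets of basis vectors. Within a single $-2$-chain $(u_1, \dots, u_k)$, consecutive vectors share exactly one basis vector and the chain is ``rigidly connected''. More precisely, using the description from the proof of Proposition \ref{minustwochains}, every basis vector appearing in the embedding of a $-2$-chain is hit by one or two \emph{consecutive} vertices of that chain, and the set of basis vectors hit by the whole chain is ``connected'' in the sense that you cannot partition the chain's vertices into two nonempty sets hitting disjoint basis vectors (each of $u_i$ and $u_{i+1}$ share a basis vector because $\langle u_i, u_{i+1}\rangle = 1 \neq 0$). So if a basis vector $e$ were hit by both a vertex of component $A$ and a vertex of component $B$, then since within $A$ the basis vectors are linked consecutively and $e$ is hit inside $A$, I can trace along $A$; the cross term forced by $e$ would have to be cancelled, but $A$ and $B$ are orthogonal componentwise. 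I would make this rigorous by induction on chain length simultaneously with the local analysis: assuming the embedding of component $A$ is (up to sign) one of the three forms from Proposition \ref{minustwochains}, check directly that adding a vertex from a different component that shares a basis vector with $A$ forces a nonzero inner product with some vertex of $A$, contradicting orthogonality. The three forms are concrete enough — $(e_1-e_2,\dots,e_k-e_{k+1})$, the exceptional $(e_1-e_2,e_2-e_3,-e_1-e_2)$, and the two-vertex case — that this is a short case check.

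Once the splitting is established, Proposition \ref{minustwochains} classifies the locally minimal embedding of each component: a $-2$-chain of length $k$ embeds either as $(e_1-e_2,\dots,e_k-e_{k+1})$ into rank $k+1$, or, when $k=3$, exceptionally as $(e_1-e_2,e_2-e_3,-e_1-e_2)$ into rank $3$. The only subtlety is that Proposition \ref{minustwochains} as stated covers chains of length $k \geq 1$, and for $k=1$ the ``essentially unique'' embedding is $(e_1-e_2)$, which is the $k=1$ instance of form (1); but we also want to record the genuinely different phenomenon that occurs for a disjoint union of \emph{two} length-$1$ components, namely the embedding $(e_1-e_2),(e_1+e_2)$ into rank $2$ rather than rank $4$ — this is form (3), and it arises precisely because two $-2$-vertices, being orthogonal, need not use disjoint basis vectors when there is nothing forcing rigidity. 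Here I should be slightly careful: my claimed ``splitting into disjoint basis vectors'' result is false for two length-$1$ components, as form (3) shows, so the induction/case-analysis above must be phrased to allow this exception — the obstruction in the argument (orthogonality of $A$ and $B$ forcing a contradiction) genuinely fails when $A$ is a single $-2$-vertex, since $\langle e_1-e_2, e_1+e_2\rangle = 0$.

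The step I expect to be the main obstacle is precisely pinning down this exceptional behaviour cleanly: I need a statement of the form ``two components share a basis vector only if both are single $-2$-vertices, in which case the shared configuration is $(e_1-e_2),(e_1+e_2)$'', and proving it requires showing that as soon as one of the two components has length $\geq 2$ (or there are $\geq 3$ mutually sharing components), the rigidity of the longer chain propagates and collides with orthogonality. I would organise the final proof as: (a) reduce to locally minimal; (b) prove the sharing lemma with its single exception; (c) invoke Proposition \ref{minustwochains} on each resulting piece; (d) assemble the three forms (1)--(3). The bookkeeping in (b) is routine but needs care to get the exceptional case exactly right.
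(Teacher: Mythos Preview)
Your proposal is correct and follows essentially the same route as the paper: apply Proposition~\ref{minustwochains} to each component (after restricting to the basis vectors that hit it, which makes the restricted embedding locally minimal), then do a direct case check showing that a basis vector shared between two components forces both to be single $-2$-vertices, giving form~(3), and that no third component can share. The paper's argument is exactly the short case analysis you describe in step~(b); no induction is actually needed, as you suspected.
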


\begin{proof}
Suppose that the embedding of the disjoint union of $-2$-chains has a component with embedding $(e_1-e_2,e_2-e_3,-e_1-e_2)$. Let $v=\lambda_1e_1+\lambda_2e_2+u$, where $u$ is not hit by $e_1$ or $e_2$, be the image of a vertex in a different component. By orthogonality to $e_1-e_2$, $\lambda_1=\lambda_2$, and by orthogonality to $-e_1-e_2$, $-\lambda_1=\lambda_2$. Thus $e_1$ and $e_2$ hit no other vertex. If $e_3$ hits another vertex, then $e_2$ must hit the same vertex by orthogonality to $e_2-e_3$, which cannot happen.

Suppose that the embedding of the disjoint union of $-2$-chains has a component with embedding $(e_1-e_2,\dots,e_k-e_{k+1})$ and one of the $e_i$'s for $1\leq i \leq k+1$ shows up again in a vertex with embedding $v$ in a different component. Since $\langle v, e_j-e_{j+1} \rangle_{-\Id} = 0$ for all $1\leq j \leq k$, $\lambda(e_1+\cdots+e_{k+1})$ is included in $v$ for some $\lambda \neq 0$, giving $v$ weight at most $-\lambda^2(k+1)$. Since $v$ has weight $-2$, $k+1=2$ and $\lambda^2=1$. So components that can share basis vectors must both have length 1, the embedding of a pair of such vertices sharing a basis vector being $(e_1-e_2),(e_1+e_2)$ up to sign and renaming. These basis vectors cannot occur in a third component by the same argument as above.
\end{proof}


\section{Plumbings Bounding Surgeries on Iterated Torus Knots}
\label{negdefsec}

Proposition \ref{donaldson} gives us an obstruction for a 3-manifold to bound a rational homology ball. In this paper we are interested in 3-manifolds of the form $S^3_n(T(p_1,\alpha_1;p_2,\alpha_2))$. Non-zero integral surgeries on knots in $S^3$ always bound a definite knot trace. 

\begin{notation*}
    Let $X$ be a $4$-manifold with boundary $S^3$, $K \subset \partial X$ a knot and $n$ an integer. Then $X_n(K)$ is the manifold obtained from attaching a $2$-handle to $X$ along $K$ with framing $n$. Especially when $X=D^4$, $X_n(K)$ is called the $n$-trace on $K$.
\end{notation*}

In particular, $Y=S^3_n(T(p_1,\alpha_1;p_2,\alpha_2))$ bounds $D^4_n(T(p_1,\alpha_1;p_2,\alpha_2))$, which has intersection form $n \Id_1$. The only restriction that the positive version of Proposition \ref{donaldson} provides us with is that $n$ be a square, whereas Aceto and Golla proved in \cite[Theorem 1.2]{acetogolla} that $Y$ will bound a rational homology ball for at most two positive $n$, making the first restriction seem futile. In this section we therefore find a different, negative definite, manifold $X$ that $Y$ bounds and whose intersection form is harder to embed.

The outline of this section is as follows. First, we use algebro-geometric facts to show that large integer surgeries on algebraic knots have a negative definite plumbing graph (Proposition \ref{prop:alggeoplumbingexistence}). Then, we use some recipes provided by Eisenbud and Neumann in \cite{eisenbudneumann} to explicitly describe the plumbing graphs of surgeries on iterated torus knots (Proposition \ref{aitork_plumbing}). Finally, for those iterated torus knots that are algebraic, we explicitly describe the plumbing graphs with the lowest possible positive index, which must thus be zero (Theorem \ref{thm:algknotsurgeryplumbing}). This section is largely based on a book by Eisenbud and Neumann (\cite{eisenbudneumann}) that provides several interesting recipes, including how to go from a singularity link to its splicing graph and from a splicing graph to a plumbing graph.

In \cite[Appendix to Chapter I]{eisenbudneumann}, Eisenbud and Neumann summarise what we know about singularities of plane curves, that is algebraic curves in $\CP^2$ or $\C^2$.

\begin{dfn*}
Let $f\in \C[x,y]$ be a non-zero polynomial vanishing at 0. Also let $C=V(f)=\{(x,y)\in \C^2 \ | \ f(x,y)=0\}$. The \textbf{singularity link} $L_\varepsilon \subset S^3_\varepsilon$ is the intersection of $C$ with a sphere $S^3_\varepsilon \subset \C^2$ centred at 0 and with sufficiently small radius $\varepsilon$.
\end{dfn*}

Singularity links are important because they describe plane curve singularities topologically. A plane curve singularity is topologically a wedge of discs, embedded inside a $4$-ball as the cone over the singularity link. If the plane curve has a self-intersection, then the singularity link at the self-intersection point has several components. A singular point that is not a self-intersection is called a \textbf{cusp} or a \textbf{cuspidal singularity}. Its singularity link is thus a knot.

\begin{dfn*}
An \textbf{algebraic knot} is a one-component singularity link, that is the link of a cuspidal singularity.
\end{dfn*}

It is interesting to know what links are singularity links. To describes the singularity link, we can try to solve $f(x,y)=0$ for $y$ in terms of $x$ around 0. If there is a singularity at 0, we cannot use the implicit function theorem and get $y$ as a function of $x$, but we can in fact find solutions in terms of fractional power series called Puiseux series. Each of these solutions describe a branch of the curve, and thus a component of the link. Different Puiseux series, differing by a change of variable $x \mapsto \zeta x$ for $\zeta$ a root of unity, describe the same branch. We can remove all but finitely many terms without changing the link until we get a minimal series of the form \[ y=x^{q_1/p_1}(a_1+x^{q_2/(p_1p_2)}(a_2+x^{q_3/(p_1p_2p_3)}(\cdots(a_{k-1}+x^{q_k/(p_1\cdots p_k)})\cdots) \] for pairs $(p_i,q_i)$ satisfying $p_i,q_i>0$ and $\gcd(p_i,q_i)=1$. (These pairs are called \textit{Newton pairs}.) Eisenbud and Neumann then show that the knot described by this Puiseux series is exactly $T(p_1,\alpha_1;p_2,\alpha_2;\cdots;p_k,\alpha_k)$ for $\alpha_1=q_1$ and $\alpha_{i+1}=q_{i+1}+p_{i+1}p_i\alpha_i$. We obtain the following alternative definition:

\begin{prop} 
A knot is algebraic if and only if it is an iterated torus knot \[ T(p_1,\alpha_1; \cdots ; p_k, \alpha_k) \] satisfying \begin{itemize}
    \item that $p_i, \alpha_i \geq 2$ for all $1 \leq i \leq k$, and
    \item that $\alpha_{i+1}>p_{i+1}p_i\alpha_i$ for all $1 \leq i \leq k-1$.
\end{itemize}
\end{prop} 

Using the algebro-geometric characterisation of these special iterated torus knots we may prove the following:

\begin{prop} \label{prop:alggeoplumbingexistence}
Let $K=T(p_1, \alpha_1; \dots ; p_k, \alpha_k)$ be an algebraic knot and $n \geq p_k\alpha_k+2$. Then $S_n^3(K)$ bounds a negative definite plumbing of disc bundles over spheres.
\end{prop}

\begin{proof}
Let $C \subset \C^2$ be a curve with singularity link $K$ at $0$. We may resolve the singularity using a sequence of blow-ups. In fact, by potentially blowing up a few more times, we can ensure that the reduced total inverse image is a simple normal crossing divisor. An example of this procedure can be seen in Figure \ref{fig:cuspresolution}.

\begin{figure}
\centering
\begin{subfigure}[b]{.45\linewidth}
\centering
\includegraphics[width=0.9\linewidth]{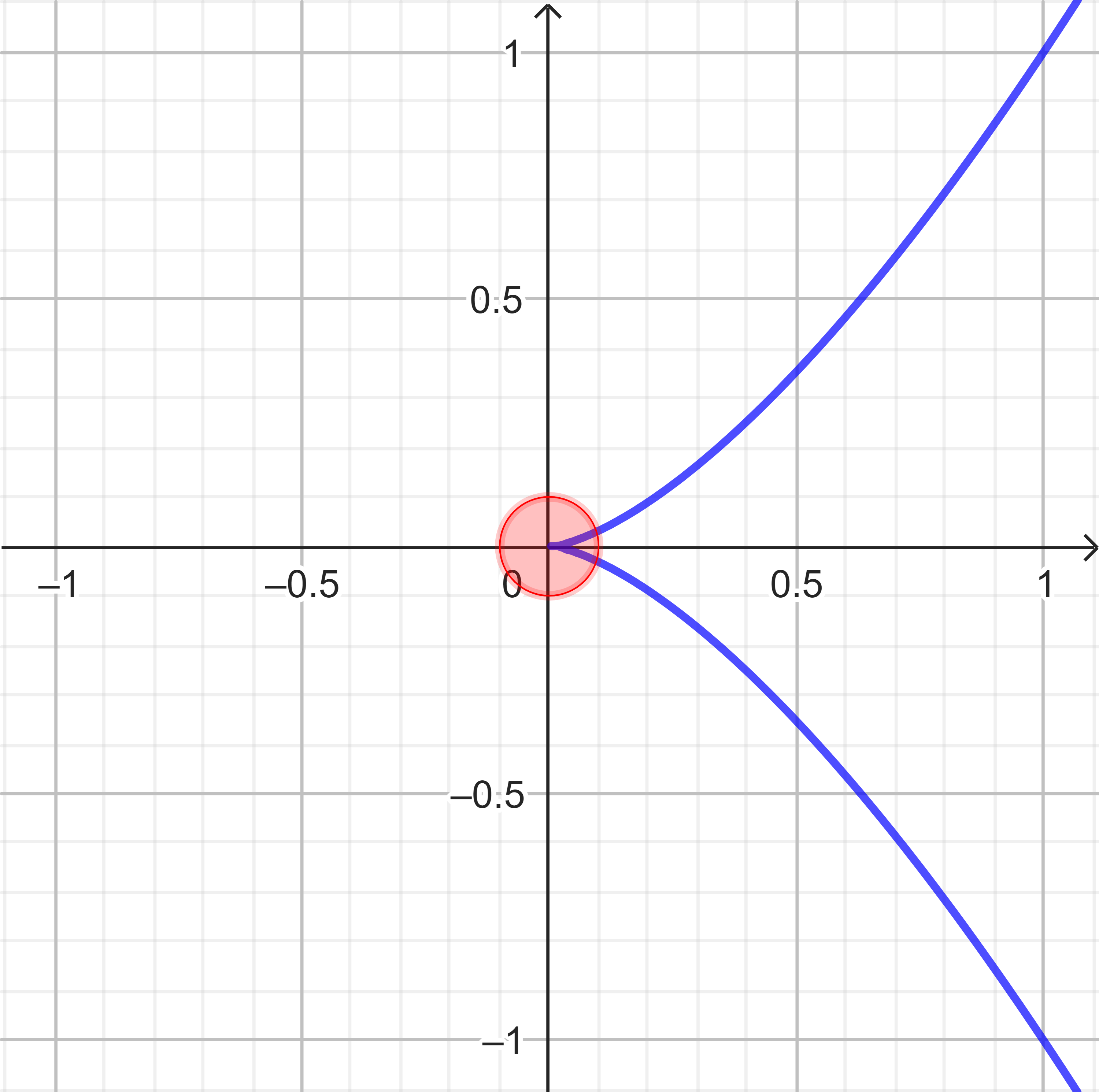}
\caption{}\label{fig:cuspresolution1}
\end{subfigure}
\begin{subfigure}[b]{.45\linewidth}
\centering
\includegraphics[width=0.9\linewidth]{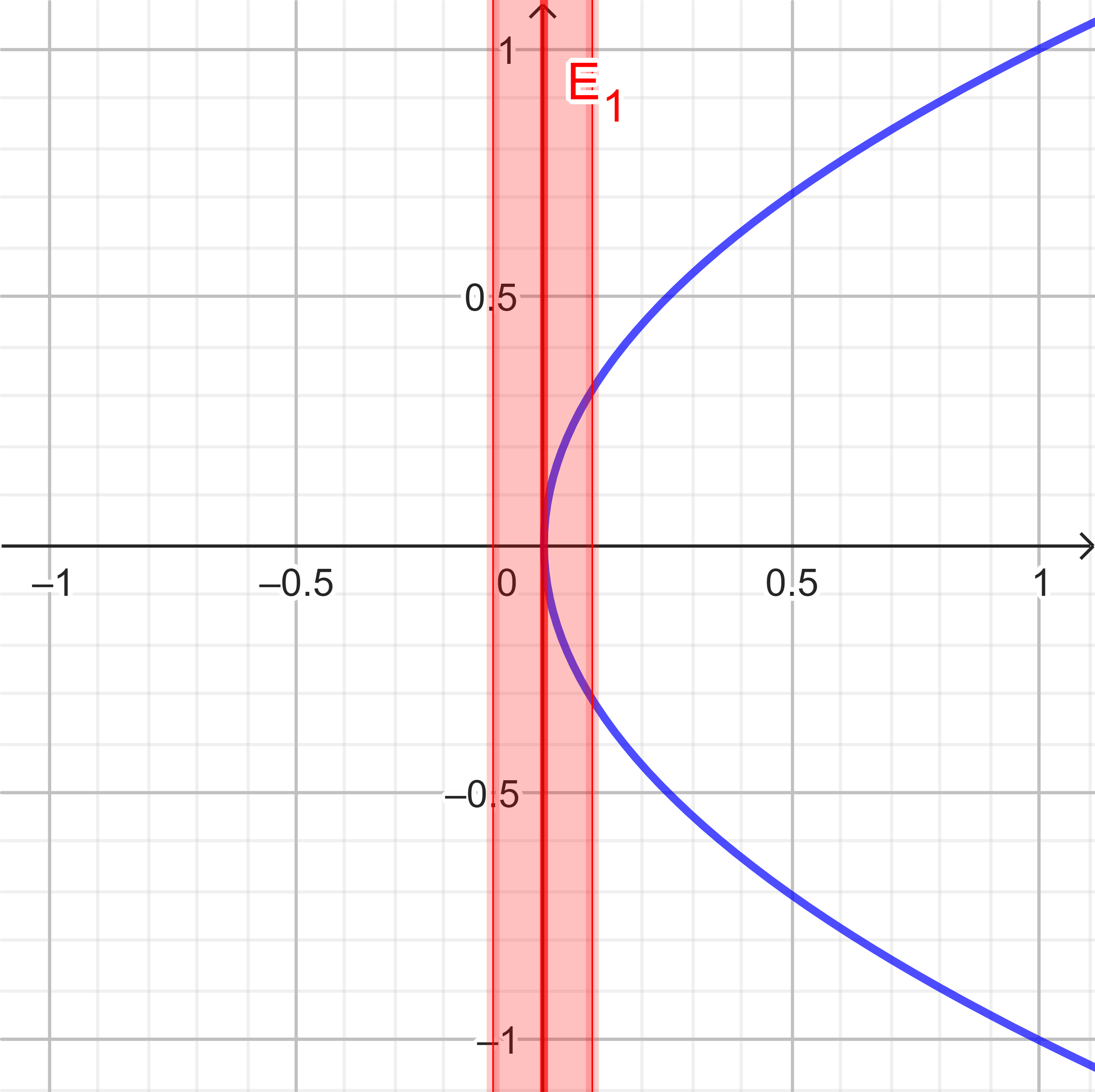}
\caption{}\label{fig:cuspresolution2}
\end{subfigure}

\begin{subfigure}[b]{.45\linewidth}
\centering
\includegraphics[width=0.9\linewidth]{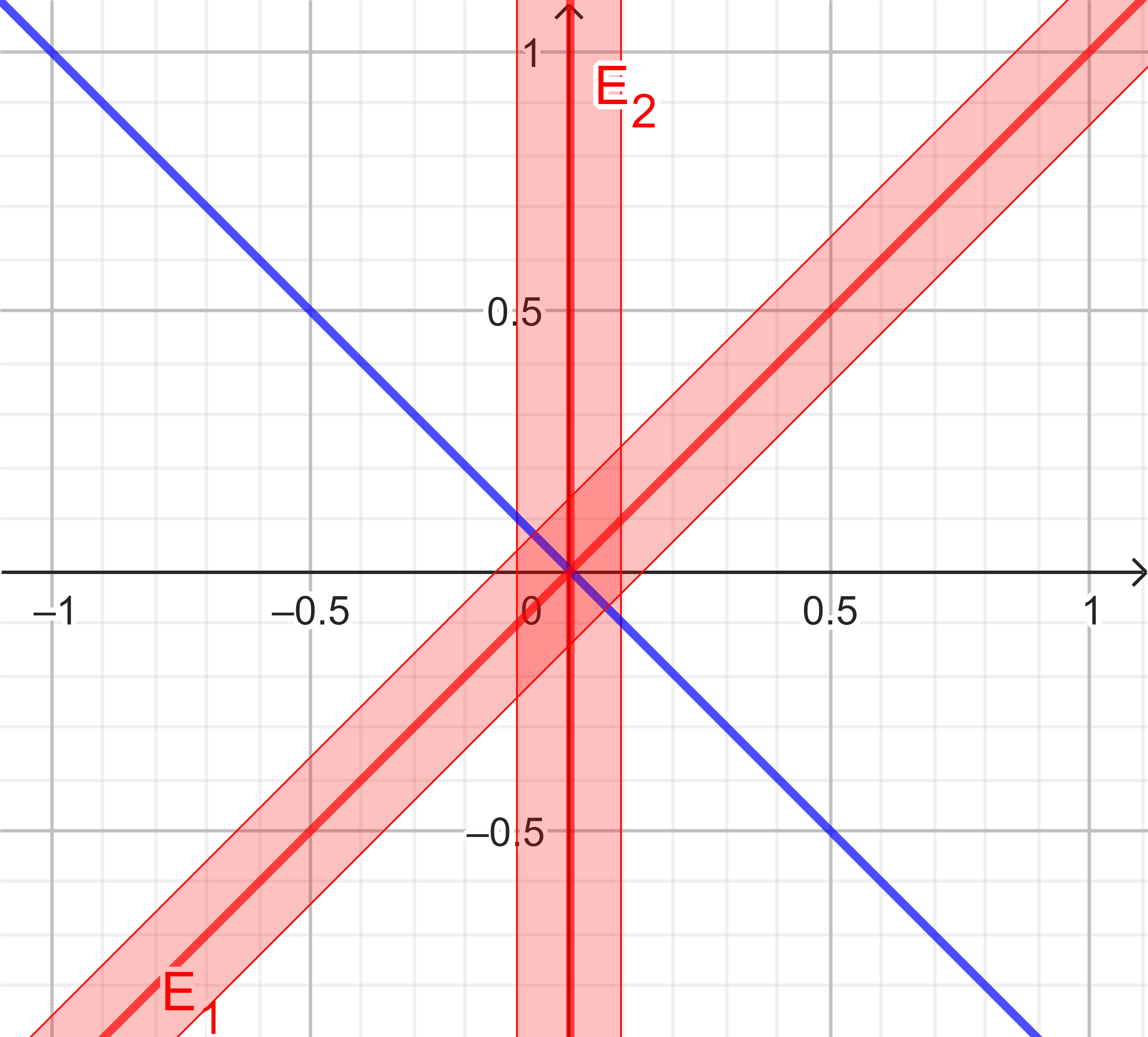}
\caption{}\label{fig:cuspresolution3}
\end{subfigure}
\begin{subfigure}[b]{.45\linewidth}
\centering
\includegraphics[width=0.9\linewidth]{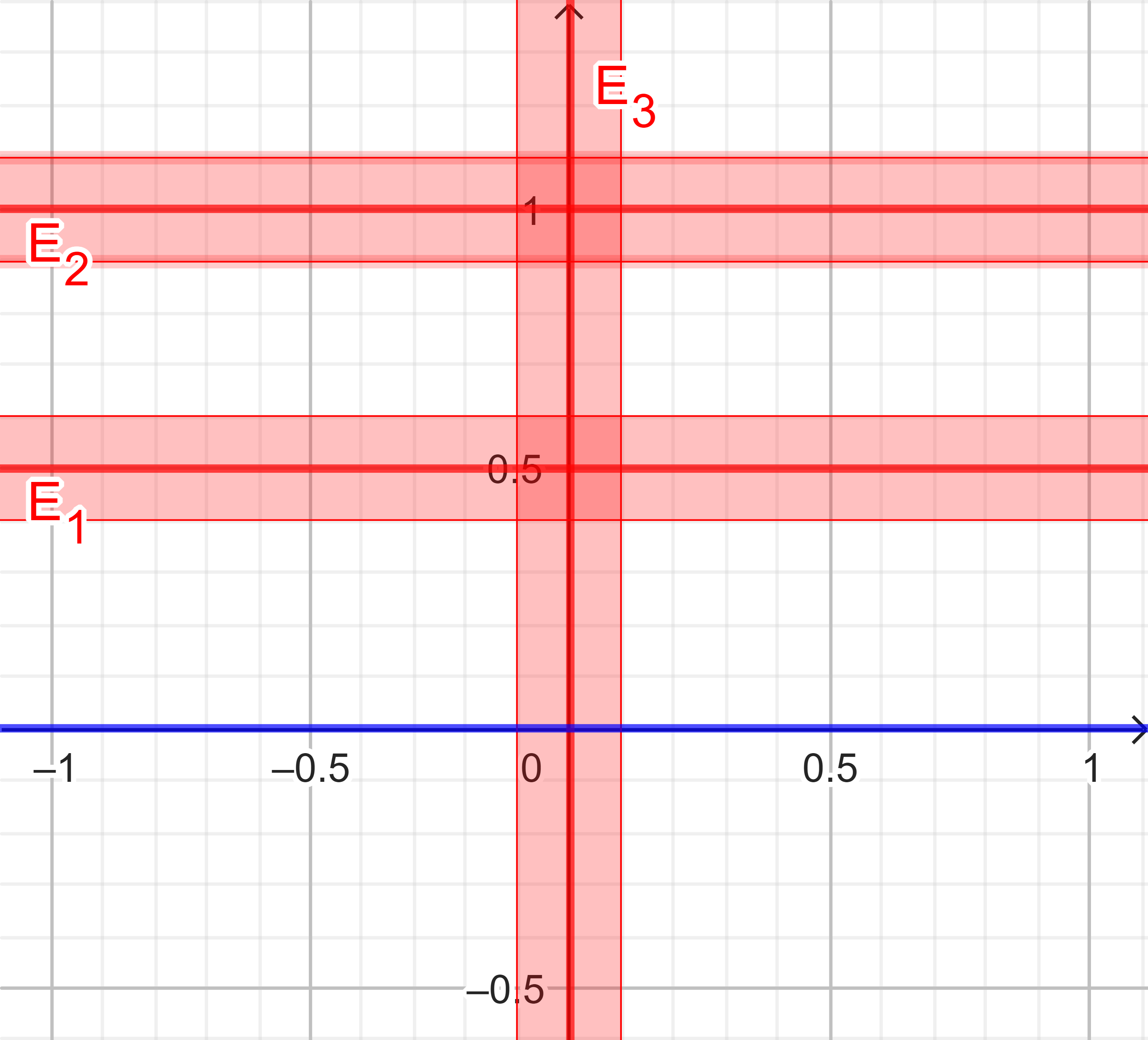}
\caption{}\label{fig:cuspresolution4}
\end{subfigure}
\caption{Minimal embedded resolution of the singularity $y^2=x^3$.}
\label{fig:cuspresolution}
\end{figure}

\begin{fact}
There exists a small 4-ball $D^4_\varepsilon \subset \C^2$ around 0, a complex surface $U \cong D^4_\varepsilon \# k \overline{\CP^2}$ and a map $p:U \to D^4_\varepsilon$ such that \begin{enumerate}
    \item $p|_{U-p^{-1}(0)}$ is a biholomorphic diffeomorphism onto $D^4_\varepsilon-\{0\}$,
    \item $p^{-1}(0)= E_1 \cup \cdots \cup E_k$ and $(f \circ p)^{-1}(0)= D \cup E_1 \cup \cdots \cup E_k$ where $D$ is a smooth disc called the proper transform of $C$ and $E_1,\dots ,E_k \cong \CP^1$ are smooth spheres called exceptional curves,
    \item $D,E_1,\dots ,E_k$ only have simple normal crossings, a.k.a. transverse double points, and
    \item the graph of intersections is in fact a tree with $D$ a leaf node.
\end{enumerate}
\end{fact}
\begin{rmk*} Such a singularity resolution is called an \textbf{embedded resolution}. \end{rmk*} Note that $\partial U=S^3$ still, with the singularity link intact inside it, but $X=U_n(K)$ has intersection form $n \Id_1 \oplus -\Id_k$, which if $n$ is positive means that the intersection form has positive index 1.

\begin{figure}[h]
\centering
\includegraphics[width=0.5\textwidth]{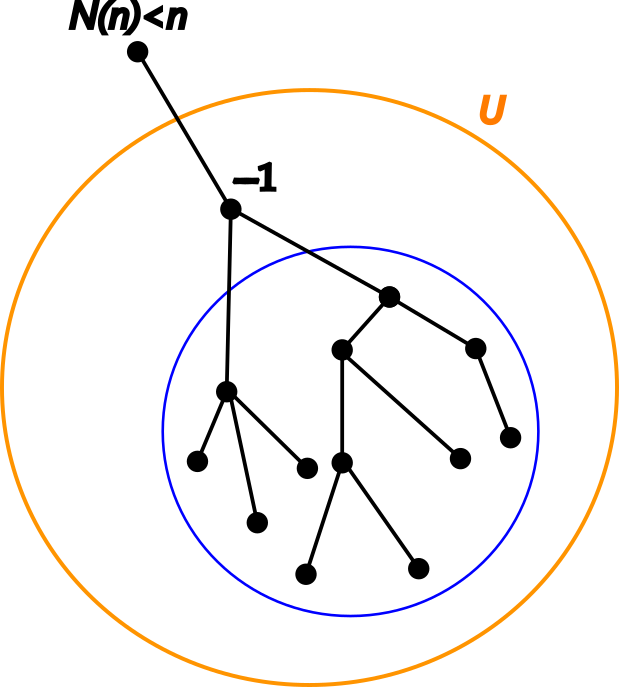}
\caption{The graph is a tree. All vertices inside the blue circle have weights at most $-2$. The 3-manifold represented by the piece of the graph inside the orange circle is $S^3$.}
\label{plumbinggraphfromalggeo}
\end{figure}

We know that $X$ has a Kirby diagram which is a disjoint union of an $n$-framed $K$ and $k$ unknots that are $-1$-framed, but we also have a more interesting representation. The way $D^4_\varepsilon$ was a regular neighbourhood of $0$, $U$ is a regular neighbourhood of $p^{-1}(0)= E_1 \cup \cdots \cup E_k$, with all crossings simple and normal. That makes $U$ a plumbing of sphere bundles, each sphere bundle being a neighbourhood of an $E_i$. Inside $U$ lies $D$, a smooth surface that intersects exactly one $E_i$ exactly once. (See Figure \ref{fig:cuspresolution4} for a great illustration. There $U$ is the red area, which is clearly a plumbing, and $D$ is the curve in blue.) Thus $X$ is a plumbing of sphere bundles obtained by adding one sphere bundle with some Euler number $N$ depending on $n$ to $U$'s plumbing representation. In fact, we know more. Every blow-up we make to create the minimal embedded resolution happens on the proper transform of the curve. Since a blow-up always decreases the self-intersection of a curve and the self intersection of the last exceptional curve we have added is always $-1$, the plumbing graph must have a shape as in Figure \ref{plumbinggraphfromalggeo}, that is a tree-shape, having one leaf of weight $N$ depending on $n$, one vertex of weight $-1$ and connected to the vertex of weight $N$ and the remaining vertices having weight no more than $-2$, and satisfying that the plumbing subgraph induced by excluding the vertex of weight $N$  has boundary $S^3$.

We can determine $N$. Note that the process of desingularising $C$ happens inside the small ball $D^4_\varepsilon$ around 0, and only depends on the singularity link, not on $n$. Blowing up decreases all self-intersections by a constant depending on where we blew up. Thus $N(n)=n-c$ for some positive constant $c$. Now, \cite[Theorem 18.3.4]{eisenbudneumann} tells us that the graph of Figure \ref{plumbinggraphfromalggeo} and the piece inside the blue circle have the same boundary if $N=0$, so if (and only if) $N=0$, $S^3_n(K)$ might be the connected sum of two different graph manifolds, which Gordon \cite{cameron} tells us in Theorem 7.5 only happens when $n=p_k\alpha_k$ for $k$ the index of the last Newton pair. Thus $N=n-p_k\alpha_k$.

\begin{figure}[h]
\centering
\includegraphics[width=\textwidth]{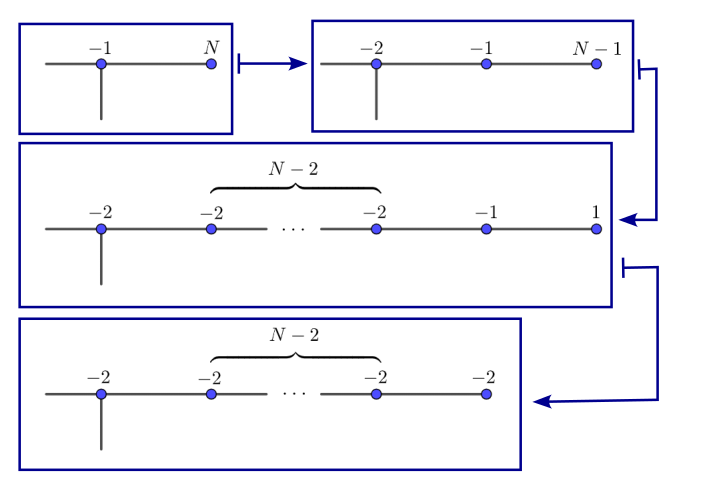}
\caption{Positive index-lowering transformation of $N$ into a sequence of $-2$'s.}
\label{Nto-2}
\end{figure}

Recall that when $n>0$, the $X$ we have described in Figure \ref{plumbinggraphfromalggeo} has positive index 1. If $N \geq 2$, we can use the sequence of blow-ups and blow-downs in Figure \ref{Nto-2} to transform it into a chain of $-2$'s and the $-1$ into a $-2$. Since in the process we blow down a $1$, this lowers the positive index by 1, giving us a negative definite graph. 
\end{proof}

\begin{rmk*}
If $N$ is negative, then $\partial X$ cannot bound a 4-manifold with a negative definite plumbing tree by \cite[Theorem 1.2]{neumann89} and Neumann's plumbing calculus (\cite[Theorem 3.2]{neumannplumbingcalculus}) since the graph in Figure \ref{plumbinggraphfromalggeo} is already in normal form and thus has the least positive index out of all plumbing trees. This is why we in Theorem \ref{mainthm} restrict ourselves to the case when $N \geq 2$, that is $n \geq p_2\alpha_2 +2$.
\end{rmk*}

Now we know exactly when surgeries on algebraic torus knots have negative definite plumbing graphs, but not exactly what these plumbing graphs look like. Fortunately, Eisenbud and Neumann have recipes for constructing them. In Chapter V of \cite{eisenbudneumann} they describe an algorithm for computing a plumbing representation of a 3-manifold from a splice diagram, which we have for $S^3_n(T(p_1,\alpha_1;p_2,\alpha_2))$ from Appendix to Chapter I. We would not need algebraicity in order to obtain such a graph, but we need algebraicity to obtain a nice simplification.

\begin{notation*}
    \[ [a_1,\dots, a_s]^-=a_1-\cfrac{1}{a_2-\cfrac{1}{\ddots - \cfrac{1}{a_s}}}. \]
\end{notation*}

\begin{figure}[h]
\centering
\includegraphics[width=\textwidth]{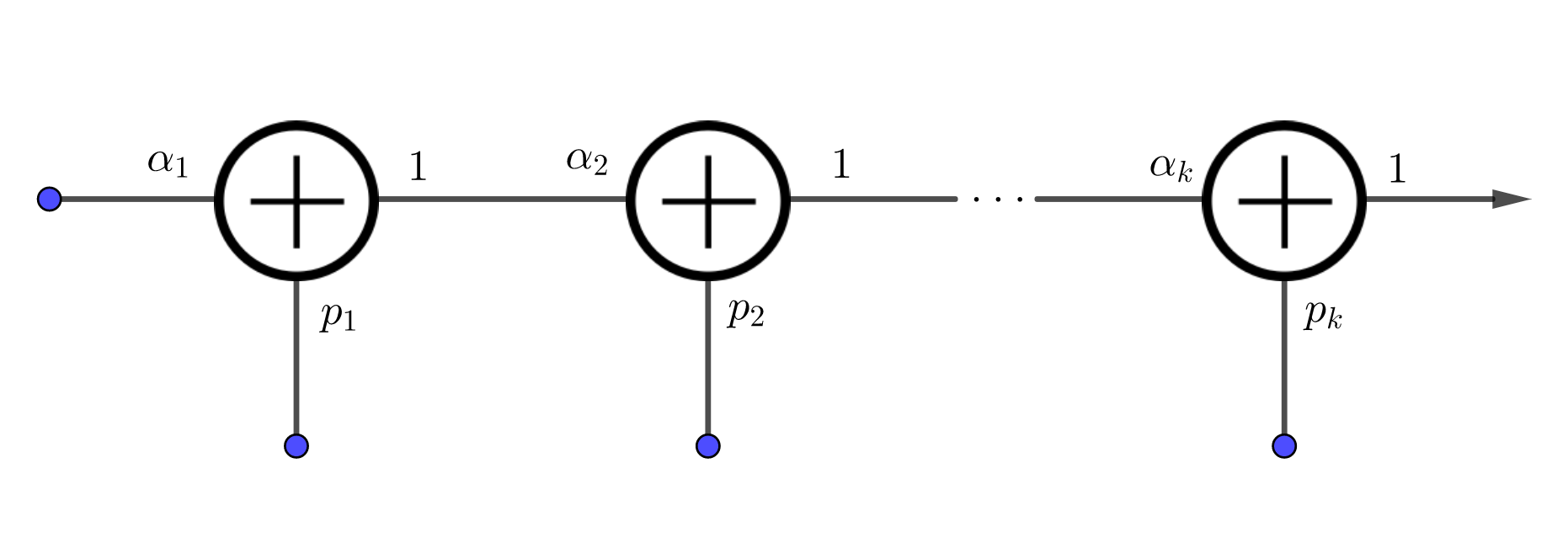}
\caption{Splicing diagram of an iterated torus knot.}
\label{itksplicinggraph}
\end{figure}

\begin{prop}
\label{aitork_plumbing}
Let $K=T(p_1,\alpha_1;p_2,\alpha_2;\dots ;p_k,\alpha_k)$ be a positive iterated torus knot. Then $S^3_n(K)$ bounds a 4-manifold with a plumbing graph as in Figure \ref{surgery-on-aitork-plumbing}, where $N=n-p_k\alpha_k$, and each $V_i$ is the graph in Figure \ref{vinkelhake}, with $[c_{i,2},c_{i,3},\dots , c_{i,s_i}]^{-}=\frac{\alpha_i}{\alpha_i-p_i}$ and $[d_{i,1},d_{i,2},\dots ,d_{i,t_i}]^-=\frac{\alpha_i}{p_i}$. Moreover, if $\alpha_i/p_i>2$, then $c_{i,2}=c_{i,3}=\cdots=c_{i,d_{i,1}-1}=2$.
\begin{figure}[h]
\centering
\includegraphics[width=\textwidth]{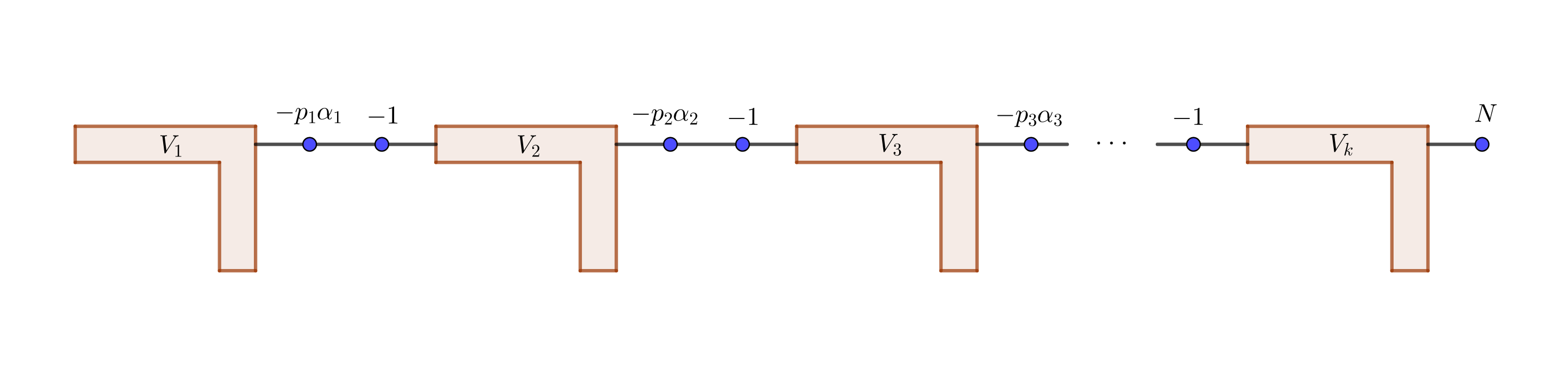}
\caption{Plumbing diagram of $S^3_n(T(p_1,\alpha_1;p_2,\alpha_2;\dots;p_k,\alpha_k))$.}
\label{surgery-on-aitork-plumbing}
\end{figure}

\begin{figure}[h]
\centering
\includegraphics[width=\textwidth]{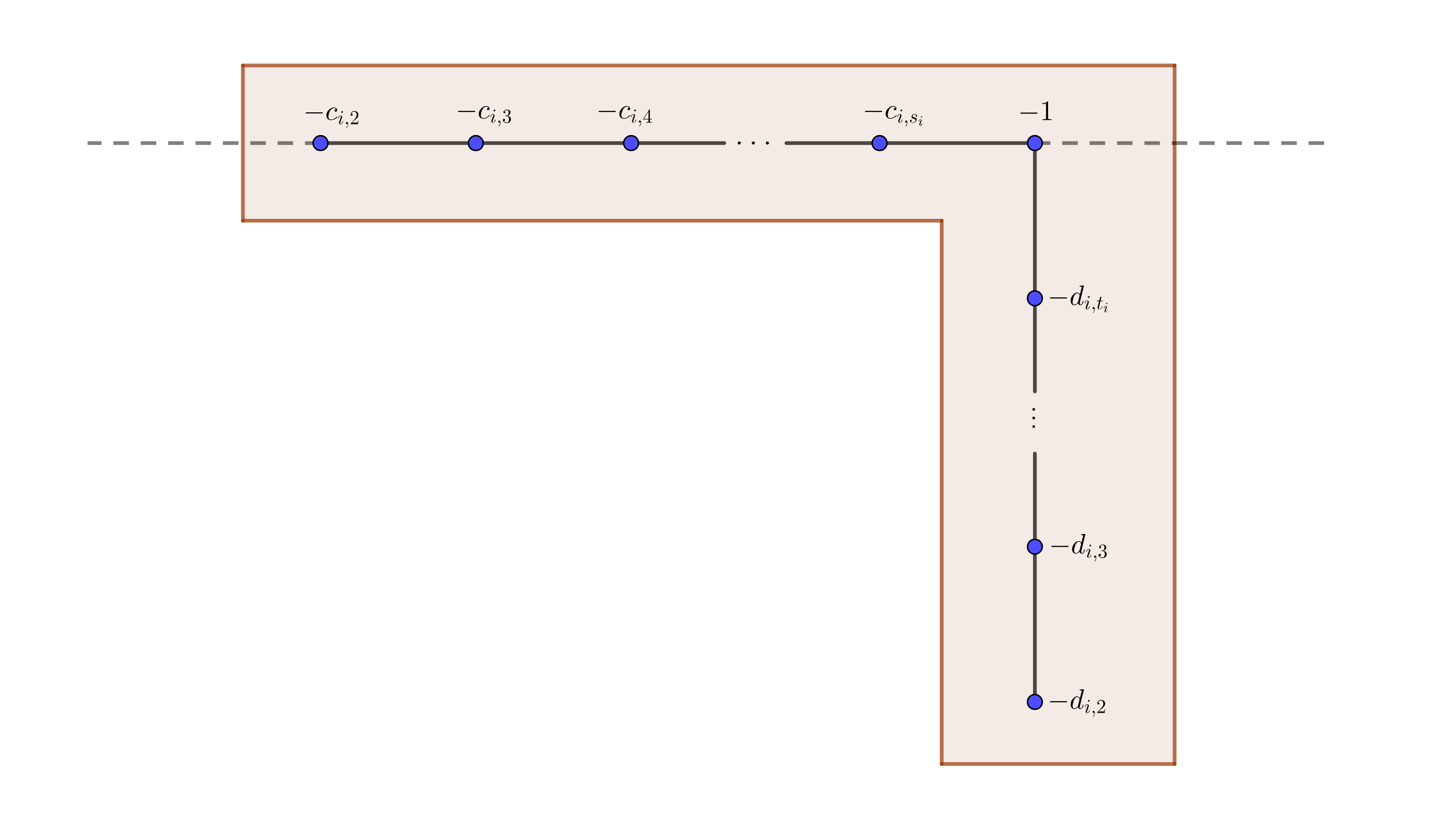}
\caption{Close-up diagram of each $V_i$ part of the plumbing diagram of $S^3_n(T(p_1,\alpha_1;p_2,\alpha_2;\dots;p_k,\alpha_k))$ in Figure \ref{aitork_plumbing}.}
\label{vinkelhake}
\end{figure}
\end{prop}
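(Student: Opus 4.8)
The plan is to assemble the plumbing graph from two standard Eisenbud--Neumann recipes and then perform a sequence of Neumann plumbing-calculus moves to put it into the claimed form. First I would write down the splice diagram of $S^3_n(K)$ for $K = T(p_1,\alpha_1;\dots;p_k,\alpha_k)$, which appears in the Appendix to Chapter I of \cite{eisenbudneumann} (this is the diagram of Figure \ref{itksplicinggraph}): it has a string of $k$ nodes, the $i$-th node carrying the Newton-pair weights $(p_i, \text{something})$ on its internal edges plus a leaf, and an extra node or leaf recording the surgery coefficient $n$ together with the total linking/multiplicity $p_1\cdots p_k$ along the knot. Then I would apply the algorithm of Chapter V of \cite{eisenbudneumann} that converts a splice diagram into a plumbing graph: each splice node of valence $3$ with weights $a, b, c$ on its edges expands into a star-shaped piece whose central vertex has weight determined by the $a,b,c$ and whose arms are the negative continued fraction expansions of the relevant ratios. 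Carrying this out node by node produces a plumbing graph in which the $i$-th splice node becomes a vertex of the ``spine'' with two attached arms, one of length $s_i - 1$ whose weights are $[c_{i,2},\dots,c_{i,s_i}]^-$ and one of length $t_i$ whose weights are $[d_{i,1},\dots,d_{i,t_i}]^-$; matching these continued fractions with $\alpha_i/(\alpha_i - p_i)$ and $\alpha_i/p_i$ respectively is the bookkeeping step that identifies the arms with the graph $V_i$ of Figure \ref{vinkelhake}. The surgery coefficient contributes the leaf of weight $N = n - p_k\alpha_k$, which is exactly the computation of $N$ carried out in the discussion preceding Proposition \ref{aitork_plumbing} (via Gordon's theorem and \cite[Theorem 18.3.4]{eisenbudneumann}).

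The core of the argument is the reduction: the raw graph coming out of the Chapter V algorithm need not be minimal, so I would invoke Neumann's plumbing calculus (\cite{neumannplumbingcalculus}) — blowing down $(-1)$-vertices, absorbing $0$-vertices, and the usual reduction moves — to bring it to a normal form, and then check that this normal form is precisely Figure \ref{surgery-on-aitork-plumbing}. Throughout I need to verify that the continued-fraction data stays consistent under these moves; the cleanest way is probably to track the associated linking matrix / the Seifert-like invariants of each star-shaped piece, since the $[\cdot]^-$ expansions are exactly the data that a chain of $(-a_j)$-vertices contributes to a Seifert piece. The identity $[a_1,\dots,a_s]^- = \alpha/\beta$ with all $a_j \geq 2$ being the unique such expansion is what makes ``read off the arm weights'' a well-defined operation.

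The final, slightly delicate point is the statement that when $\alpha_i/p_i > 2$ one has $c_{i,2} = c_{i,3} = \cdots = c_{i,d_{i,1}-1} = 2$. This is where algebraicity ($\alpha_{i+1} > p_{i+1}p_i\alpha_i$, hence in particular $\alpha_i/p_i$ large relative to the preceding data) gets used: I would prove it by a direct analysis of the two continued fractions $\alpha_i/(\alpha_i - p_i) = [c_{i,2},\dots,c_{i,s_i}]^-$ and $\alpha_i/p_i = [d_{i,1},\dots,d_{i,t_i}]^-$, using the elementary fact that $\alpha/(\alpha-\beta)$ and $\alpha/\beta$ have ``dual'' (Riemenschneider point-diagram) continued fraction expansions, so that a long initial string of $2$'s in one corresponds to a large first entry in the other. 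Concretely, $\alpha_i/(\alpha_i - p_i)$ is just slightly bigger than $1$ when $\alpha_i \gg p_i$, forcing its expansion to begin with many $2$'s, and the count of those $2$'s is governed by $\lfloor \alpha_i/p_i \rfloor = d_{i,1} - 1$ (up to the usual off-by-one in negative continued fractions). I expect this last lemma on the structure of the two dual continued fractions to be the main obstacle — everything before it is a (careful but routine) application of the Eisenbud--Neumann and Neumann machinery, whereas here one has to extract a precise combinatorial statement about where the string of $2$'s ends and connect the cutoff index to $d_{i,1}$.
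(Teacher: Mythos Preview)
Your overall route matches the paper's: write down the splice diagram from the Appendix to Chapter~I of \cite{eisenbudneumann}, then feed it through the Chapter~V/Section~22 recipe to produce the plumbing graph, with the ``Moreover'' clause coming from Riemenschneider duality. Two points, however, are off.

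First, you have the role of the plumbing calculus backwards. The graph of Figure~\ref{surgery-on-aitork-plumbing} is \emph{not} in Neumann normal form --- each node $V_i$ carries a $-1$ at its corner --- and the paper explicitly says so right after the proposition. Figure~\ref{surgery-on-aitork-plumbing} is the \emph{raw} output of Theorem~22.1/22.2, and the blow-downs and $0$-absorptions you describe are performed only afterwards (Figure~\ref{contractinggraph}) to reach the negative definite graph. So your ``core of the argument'' step (reduce via Neumann's calculus and check it matches the figure) is not needed here and would in fact destroy the claimed graph. What the paper does instead --- and what you have skipped --- is the computation that the corner weight of each hook $V_i$ equals $-1$. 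The paper flags this as ``the only non-trivial part'': one must evaluate
\[
\frac{1}{p_i\alpha_i}+\frac{1}{[c_{i,s_i},\dots,c_{i,2}]^-}+\frac{1}{[d_{i,t_i},\dots,d_{i,2}]^-}
\]
using the fact that reversing a continued fraction $[a_1,\dots,a_l]^-=a/b$ gives $a/b^*$ with $bb^*\equiv 1\pmod a$, and then argue that this sum is a positive integer strictly less than $2$. Your proposal acknowledges that the central weight is ``determined by the $a,b,c$'' but does not carry out or outline this computation.

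Second, in the ``Moreover'' clause you invoke algebraicity, but the hypothesis there is simply $\alpha_i/p_i>2$ for a fixed $i$; no relation between consecutive Newton pairs is used. Your method (Riemenschneider point-rule duality between $\alpha_i/(\alpha_i-p_i)$ and $\alpha_i/p_i$) is exactly right and is how the paper treats it, but the argument is purely about a single pair $(p_i,\alpha_i)$: $d_{i,1}=\lceil\alpha_i/p_i\rceil$ large forces the dual expansion to begin with $d_{i,1}-2$ twos. The algebraicity hypothesis enters only \emph{after} the proposition, to guarantee that each Torso~$i+1$ begins with at least $p_i\alpha_i-1$ twos so that the contraction of Figure~\ref{contractinggraph} goes through.
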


Note that $[c_{i,2},c_{i,3},\dots , c_{i,s_i}]^{-}$ and $[d_{i,1},d_{i,2},\dots ,d_{i,t_i}]^-$ are related by the Riemenschneider point rule (\cite[Section 3]{riemenschneider} for the original source, or \cite[Section 3]{lecuonalisca11} for an explanation in English). We may also point out that our unusual choice of indexing of the $c$'s is due to the fact that $[1,c_{i,2},c_{i,3},\dots , c_{i,s_i}]^{-}=\frac{p_i}{\alpha_i}$.

\begin{proof}
Chapter II and Appendix to Chapter 1 in \cite{eisenbudneumann} describe how to write down a cabling using a splicing graph, the result being as shown in Figure \ref{itksplicinggraph}. Section 22 in \cite{eisenbudneumann} then gives a recipe for translating this graph into a plumbing graph. First, each plus in the graph gets translated into the hook of Figure \ref{vinkelhake} using Theorem 22.1. The only non-trivial part here is computing the number at the corner of the hook. According to Theorem 22.1, it is the additive inverse of \[ \frac{1}{p_i\alpha_i}+ \frac{1}{[c_{i,s_i}, \dots , c_{i,2}]^-}+\frac{1}{[d_{i,t_i}, \dots , d_{i,2}]^-}=\frac{1}{p_i\alpha_i}+\frac{(\alpha_i-p_i)^*}{\alpha_i}+\frac{\left( p_i\left \lceil{\frac{\alpha_i}{p_i}}\right \rceil - \alpha_i\right)^* }{p_i},\] where $0<a^*<b$ in $\frac{a^*}{b}$ is a number such that $aa^* \equiv 1 \pmod{b}$. The above computation uses the fact that if $[a_1,\dots ,a_l]^-=a/b$ for $b<a$ then $[a_l,\dots ,a_1]^-=a/(b^*)$ from for example \cite[Lemma 2.4]{owensstrle12}. The number above is a positive integer less than 2 since $\frac{1}{p_i\alpha_i}+\frac{(\alpha_i-p_i)^*}{\alpha_i}<1$.

\begin{figure}[h]
\centering
\includegraphics[width=\textwidth]{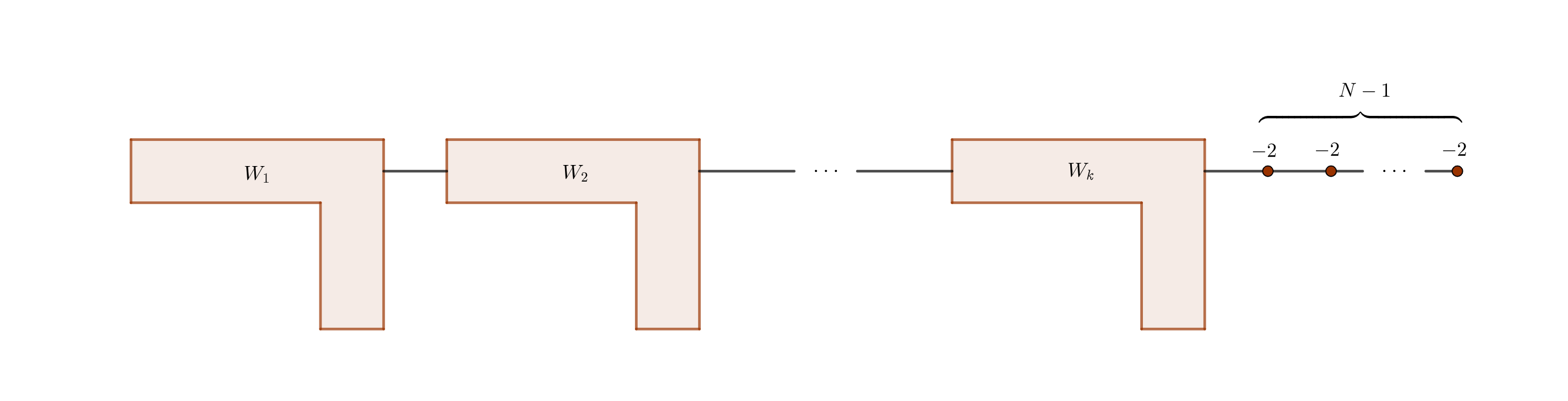}
\caption{The minimal negative definite plumbing diagram of $S^3_n(T(p_1,\alpha_1;p_2,\alpha_2;\dots;p_k,\alpha_k))$. Each $W_i$ is a subgraph described by Figure \ref{minimalvinkelhake}. Here $N=n-p_k\alpha_k$.}
\label{minimalplumbingaitork}
\end{figure}

Figure \ref{surgery-on-aitork-plumbing} (with an arrow instead of an $N$) is obtained from Theorem 22.2, the Addendum to Theorem 22.1 and the fact that $\left \lceil{\frac{p_i}{\alpha_i}}\right \rceil=1$. That picture represents a knot inside a plumbed 3-manifold (boundary of a plumbed 4-manifold), which in our case is $S^3$. Note that several different plumbings can represent the same 3-manifold, but they are related to each other through blow-ups and blow-downs and some other 0-related moves by Neumann's plumbing calculus, whose formulation adapted to links in plumbing graphs is \cite[Theorem 18.3]{eisenbudneumann}. Thus, Figure \ref{surgery-on-aitork-plumbing} (with an arrow instead of an $N$) is equivalent through these moves to the graph of $U$ in Figure \ref{plumbinggraphfromalggeo} with an arrow sticking out of the $-1$-vertex. We will see later exactly how to go from Figure \ref{surgery-on-aitork-plumbing} to the graph from algebraic geometry sketched in Figure \ref{plumbinggraphfromalggeo}. What is important is that the graphs are related by blow-ups and blow-downs happening away from the vertex of weight $N$, so the $N$'s of Figure \ref{surgery-on-aitork-plumbing} and Figure \ref{plumbinggraphfromalggeo} are the same.
\end{proof}

\begin{figure}[h]
\centering
\includegraphics[width=\textwidth]{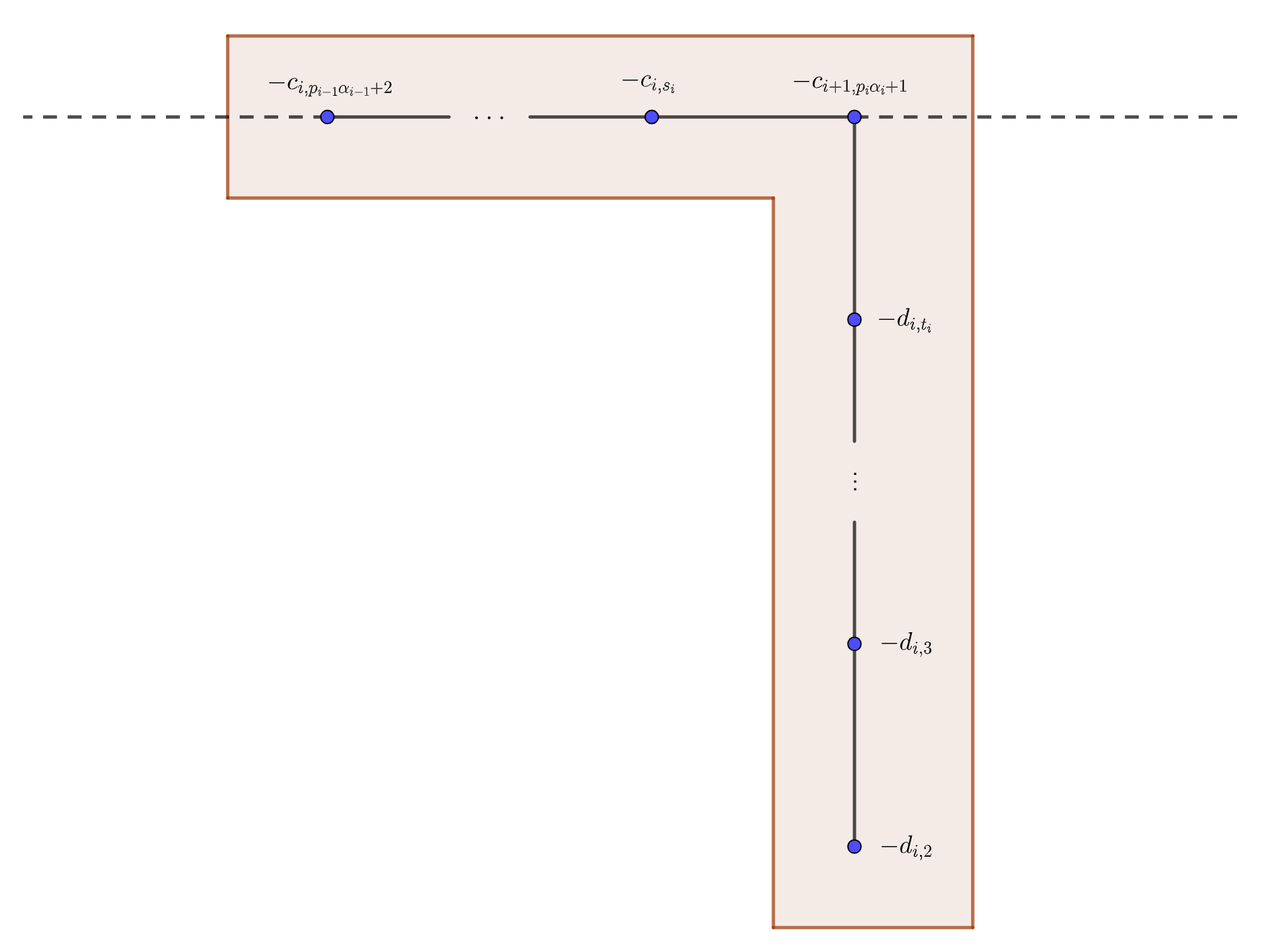}
\caption{Close-up diagram of each $W_i$ part of the plumbing diagram of $S^3_n(T(p_1,\alpha_1;p_2,\alpha_2;\dots;p_k,\alpha_k))$ in Figure \ref{minimalplumbingaitork}. In order for this diagram to make sense at the extremities, interpret $p_0\alpha_0$ as $0$, and $-c_{k+1,p_k\alpha_k+1}$ as $-2$.}
\label{minimalvinkelhake}
\end{figure}

The plumbing graph in Figure \ref{surgery-on-aitork-plumbing} is not ``minimal", that is, it contains vertices of weight $-1$ that can be blown down. It cannot be the one obtained from a blow-up resolution of a singularity since it is not of the form described in Figure \ref{plumbinggraphfromalggeo}. We note that if $K=T(p_1,\alpha_1;p_2,\alpha_2;\dots ;p_k,\alpha_k)$ is an algebraic knot, then $d_{i+1}=\big\lceil \frac{\alpha_{i+1}}{p_{i+1}}\big\rceil \geq p_i\alpha_i+1$, so the sequence $(c_{i+1,2},c_{i+1,3},\dots ,c_{i+1,s_i})$ is initiated by at least $p_i\alpha_i-1$ twos. There must also be a non-two in the sequence, as otherwise, by Riemenschneider's point rule, $\alpha_i/p_i$ would be an integer. Thus, for algebraic knots, we are going to strengthen Proposition \ref{aitork_plumbing} into the following theorem:

\begin{thm} \label{thm:algknotsurgeryplumbing}
    Let $K=T(p_1,\alpha_1;p_2,\alpha_2;\dots ;p_k,\alpha_k)$ be an algebraic knot and let $n \geq p_k\alpha_k+2$. Then $S^3_n(K)$ bounds a negative definite plumbed $4$-manifold with the graph shown in Figure \ref{minimalplumbingaitork}, where each hook $W_i$ is described by Figure \ref{minimalvinkelhake}. Again, $N=n-p_k\alpha_k$, $[c_{i,2},c_{i,3},\dots , c_{i,s_i}]^{-}=\frac{\alpha_i}{\alpha_i-p_i}$ and $[d_{i,1},d_{i,2},\dots ,d_{i,t_i}]^-=\frac{\alpha_i}{p_i}$.
\end{thm}

\begin{proof}
Assuming that $K$ is algebraic and thus that the sequence $(c_{i+1,2},c_{i+1,3},\dots ,c_{i+1,s_i})$ is initiated by at least $p_i\alpha_i-1$ twos, Figure \ref{contractinggraph} shows us a way to contract the plumbing graph, substituting it by one representing the same $3$-manifold. The first step is a sequence of $p_i\alpha_i-1$ $(-1)$-blow-downs, the second one is another $-1$-blow-down, and the third one is a $0$-absorption corresponding to Theorem 18.3.3 in \cite{eisenbudneumann}, or Section 1 in \cite{neumann89}, which also describes the effect of these operations on the index of the intersection form of the 4-manifold, in this case the effect being that both the positive and the negative index is decreased by one. These contractions, which happen far away from the arrow/$N$ vertex, allow us to change our plumbing to one where all vertices except maybe the one of weight $N$ and the one adjacent to it (of weight $-1$) are of weight at most $-2$, just like the graph in Figure \ref{plumbinggraphfromalggeo}. In fact, our graph looks like the graph in Figure \ref{minimalplumbingaitork} but with the rightmost node having weight $-1$ instead of $-2$ and just an $N$-weighted vertex instead of the rightmost chain of $N-1$ vertices of weight $-2$. Using that $N\geq 2$ and the plumbing calculus of Figure \ref{Nto-2}, we finally obtain Figure \ref{minimalplumbingaitork}. The fact that the graph in Figure \ref{minimalplumbingaitork} is negative definite follows from the existence of a negative definite plumbing graph, the fact that the graph in Figure \ref{minimalplumbingaitork} is in normal form (defined on \cite[Page 4]{neumann89}) and by the uniqueness theorems of plumbing graphs in normal form \cite[Theorem 1.2]{neumann89} and \cite[Theorem 18.3]{eisenbudneumann}.

\begin{figure}[h]
\centering
\includegraphics[width=\textwidth]{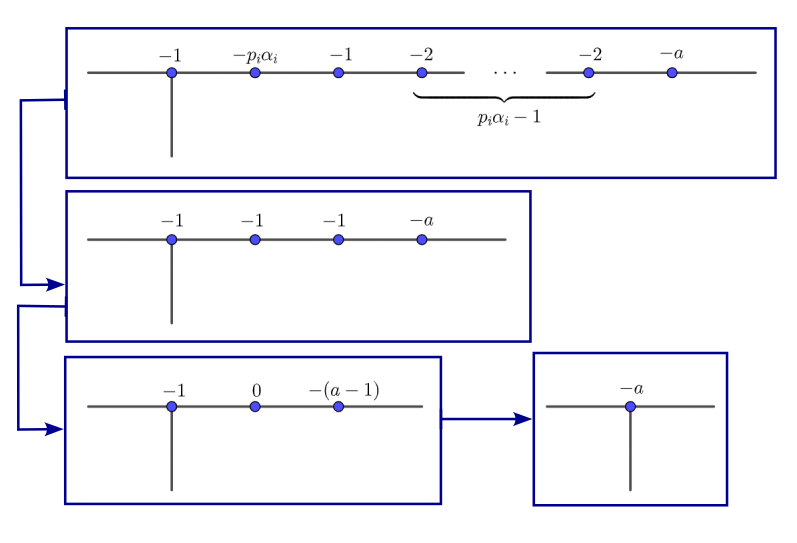}
\caption{Contracting the graph by blowing down}
\label{contractinggraph}
\end{figure}
\end{proof}

When working with lattice embeddings, we often need different tools depending on whether the trivalent vertices have weight $-2$ or if the weight is lower. In Section \ref{latticeanalysissec} this is relevant because if all trivalent vertices have weight $-2$, then none of the vertices with weight less that $-2$ are adjacent. On the other hand, if every vertex in a graph has weight at least its valency and one strictly greater, then its intersection form has a weakly chained diagonally dominant matrix, whose determinant is always non-zero. This can for example be used to argue that if such a graph has a lattice embedding, then a basis vector hitting only one vertex must be hitting a vertex with weight exactly its valency, something which has for example been used in \cite{GALL}. Because of this difference in available tools, we call the knot $T(p_1,\alpha_1;\dots ;p_k,\alpha_k)$ \textbf{super-algebraic} if all of the contractions in Figure \ref{contractinggraph} have $a=2$, that is $d_{i+1,1}-1=\left \lceil \frac{\alpha_{i+1}}{p_{i+1}} \right \rceil -1 \geq p_i\alpha_i$ for all $i$.

We end this section by introducing the following terminology, based on \textit{The Human Centipede}: considering the negative definite graph of Figure \ref{minimalplumbingaitork}, if we remove the trivalent vertices, the rightmost horizontal segment will be called \textbf{the tail}, the rest of them will be \textbf{torsos} numbered from left to right, and the vertical segments will be \textbf{legs}, also numbered from left to right. The trivalent vertices will simply be called \textbf{nodes}, also numbered from left to right. The union of Torso $i$, Leg $i$ and Node $i$ will be called \textbf{Body} $i$.


\section{Existence of Lattice Embeddings}
\label{latticeanalysissec}

In this section, we check our negative definite lattices from Section \ref{negdefsec} for embeddability in order to use the obstruction of Proposition \ref{donaldson}. The reader should be warned about the technical nature of lattice embeddings, and that the easiest way of understanding a proof using them is, just like for diagram chasing, to work it out on one's own.

We will first prove the theorem carefully in the super-algebraic case ($\alpha_2/p_2>p_1\alpha_1+1$ rather than $\alpha_2/p_2>p_1\alpha_1$). We split the proof for the super-algebraic case into two propositions depending on whether $\alpha_2 \equiv -1 \pmod{p_2}$ or $\alpha_2 \equiv 1 \pmod{p_2}$. The case $\lceil \alpha_2/p_2 \rceil = p_1\alpha_1+1$, proved in less detail as a separate proposition afterwards, is similar, but requires separate consideration due to some vertices of weight lower than $-2$ being adjacent.

\begin{prop}
\label{propminus1}
Let $\alpha_1 \equiv 1 \pmod{p_1}$, $\alpha_2 \equiv -1 \pmod{p_2}$, $\alpha_2/p_2 > p_1\alpha_1+1$ and $n \geq 2+p_2\alpha_2$. Then the rational homology 3-sphere $S^3_n(T(p_1,\alpha_1; p_2, \alpha_2))$ bounds a rational homology 4-ball if and only if the tuple \[ (p_1, \alpha_1; p_2, \alpha_2; n)\] is one of the following:
\begin{enumerate}
    \item $(p_1,p_1+1; p_2, p_2(p_1+1)^2-1; p_2^2(p_1+1)^2)$ or
    \item $(2,7; p_2, 16p_2-1; 16p_2^2)$.
\end{enumerate}
\end{prop}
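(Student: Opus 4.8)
The plan is to carry out the lattice-embedding analysis of the negative definite plumbing graph $\Gamma$ for $S^3_n(T(p_1,\alpha_1;p_2,\alpha_2))$ produced in Section \ref{negdefsec}, using the obstruction of Proposition \ref{donaldson}: if the 3-manifold bounds a rational homology ball, then $(\Z\langle V_i\rangle, Q_\Gamma)$ must admit a lattice embedding into $(\Z^{\rk H_2},-\Id)$. Since $N=n-p_2\alpha_2\geq 2$, the $N$-vertex has been replaced by a chain of $N$ twos (and the adjacent $-1$ by a $-2$), so $\Gamma$ is a "centipede" with two bodies: Body $1$ (Torso $1$, Leg $1$, Node $1$), Body $2$, and a tail, with $\alpha_1\equiv 1\pmod{p_1}$ and $\alpha_2\equiv -1\pmod{p_2}$ controlling which segments are pure $-2$-chains via the Riemenschneider point rule. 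First I would record the explicit weights: because $\alpha_1\equiv 1\pmod{p_1}$ one of the two continued fractions attached at Node $1$ degenerates nicely (Leg $1$ becoming a short chain, essentially a single vertex of weight $-(p_1+1)$ or so), and because $\alpha_2\equiv-1\pmod{p_2}$ the analogous simplification happens at Node $2$; super-algebraicity $\alpha_2/p_2>p_1\alpha_1+1$ guarantees Torso $2$ begins with at least $p_1\alpha_1$ twos, so no two sub-$(-2)$ vertices are adjacent, which is exactly the hypothesis that lets us apply Proposition \ref{minustwochains} and Corollary \ref{unionofminus2chains} to the maximal $-2$-chains obtained by deleting the (at most two) nodes.

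The core of the argument is local-to-global. Assume a locally minimal embedding exists (we may reduce to this case by discarding basis vectors hitting nothing). Deleting Node $1$ and Node $2$ breaks $\Gamma$ into a disjoint union of $-2$-chains plus the two node vertices and possibly Leg $1$, Leg $2$; by Corollary \ref{unionofminus2chains} each such chain embeds in one of the three rigid ways. I would then put the node vectors $n_1,n_2$ back: $n_1$ has weight $w_1$ (its valency is $3$, and in the super-algebraic $\alpha_2/p_2>p_1\alpha_1+1$ regime $w_1$ may equal $-3$ or be more negative), and it must have inner product $1$ with the end vectors of the three incident chains (two torso/tail pieces and Leg $1$). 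Orthogonality to all the interior chain edges forces, as in the proofs of Proposition \ref{minustwochains} and Corollary \ref{unionofminus2chains}, that $n_1$ contain a "full sum" $\pm(e_a+\cdots+e_b)$ along each incident chain except at its very end; counting squares then pins down $w_1$ and forces most of the chains to have the "straight" form $(e_1-e_2,\dots,e_k-e_{k+1})$ with $n_1$ attaching at a shared endpoint. Propagating from Node $1$ through Torso $2$ to Node $2$ and out to the tail and Leg $2$, the constraint "weight $=-(\text{number of basis vectors used})$, with compatible overlaps" becomes a system of equations in $p_1,\alpha_1,p_2,\alpha_2,n$; solving it should leave exactly the two advertised families, where the long $-2$-chain coming from $N$ must precisely fill the gap between the two bodies. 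The relation $n=p_2^2(p_1+1)^2$ in case (i) is forced because $N=n-p_2\alpha_2$ together with $\alpha_2=p_2(p_1+1)-1$ makes the $N$-chain length match the combinatorial requirement; case (ii) is the sporadic solution of the Diophantine system with $p_1=2,\alpha_1=7$.

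For the converse (sufficiency) I would exhibit, for each of the two families, an explicit rational homology 4-ball bounding $S^3_n(K)$ — and here I expect to simply invoke \cite[Theorem 1.3]{GALL}: as the introduction notes, both families are cables of positive torus knots whose relevant surgeries bound rational homology balls, and the cabling construction of that theorem produces the required rational ball for the cable at surgery coefficient $n=p_2^2(p_1+1)^2$ (respectively $16p_2^2$); one checks the coefficient matches and that this $n$ satisfies $n\geq p_2\alpha_2+2$. The main obstacle is the forward direction: organising the case analysis of how $n_1$ and $n_2$ sit relative to the incident $-2$-chains, since there are several combinatorial branches (which chain gets the non-straight embedding of Corollary \ref{unionofminus2chains}(2), whether two short chains share a basis vector as in (3), whether Leg $i$ is itself a longer chain with its own non-$(-2)$ vertex), and showing that all but the two surviving branches lead to a contradiction via a square-count or a parity obstruction like the $1=2\lambda$ impossibility used at the end of the proof of Proposition \ref{minustwochains}. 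Keeping careful track of the arithmetic dictionary between continued-fraction entries and $(p_i,\alpha_i)$ — especially which entry equals $d_{i,1}=\lceil\alpha_i/p_i\rceil$ — is what turns the combinatorial rigidity into the stated Diophantine classification.
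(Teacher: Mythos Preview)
Your overall strategy matches the paper's: use Proposition~\ref{donaldson} on the negative definite plumbing graph, exploit the rigidity of the $-2$-chains (Proposition~\ref{minustwochains} and Corollary~\ref{unionofminus2chains}) to pin down a partial embedding, then run a case analysis on the remaining vertices, and close with \cite[Theorem~1.3]{GALL} for sufficiency. That part is fine.

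Where your proposal goes wrong is in the concrete structure of the graph, and this matters because it determines which vertices the case analysis actually pivots on. You write that Leg~$1$ is ``essentially a single vertex of weight $-(p_1+1)$'' and that Node~$1$ ``may equal $-3$ or be more negative'' in the super-algebraic regime. Both statements are incorrect. With $\alpha_1=k_1p_1+1$ one has $\alpha_1/p_1=[k_1+1,2,\dots,2]^-$ (with $p_1-1$ twos), so Leg~$1$ is a chain of $(p_1-1)$ vertices all of weight $-2$, and the vertex of weight $-(p_1+1)$ sits at the end of Torso~$1$. More importantly, super-algebraicity $\alpha_2/p_2>p_1\alpha_1+1$ is \emph{precisely} the condition that forces both trivalent nodes to have weight exactly $-2$; the whole point of separating out this case is that the nodes then lie inside the $-2$-chains and get embedded for free by Corollary~\ref{unionofminus2chains}. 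So ``putting the node vectors back'' is not the step to perform.

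The correct picture (Figure~\ref{alpha_2_cong_minus_1} in the paper) has exactly three non-$(-2)$ vertices, none of them trivalent: the vertex $v$ of weight $-(p_1+1)$ at the end of Torso~$1$, the vertex $w$ of weight $-3$ inside Torso~$2$ (since $\alpha_2\equiv -1\pmod{p_2}$), and the single vertex $u$ of weight $-p_2$ constituting Leg~$2$. Once the $-2$-chains (which include both nodes) are rigidly embedded, the entire case analysis hinges on the embedding of the $-3$ vertex $w$: there are three ways to write $-3$ compatibly with its two $-2$-neighbours, and chasing each through the orthogonality constraints on $v$ and $u$ yields either a contradiction or one of the two listed families. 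Your proposed split on $n_1,n_2$ would not get off the ground because those vectors are already determined. If you redo the continued-fraction computation to get the graph right, the remainder of your plan (square-count, Diophantine system, parity obstructions) is exactly what the paper does.
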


\begin{figure}
\centering
\includegraphics[width=\textwidth]{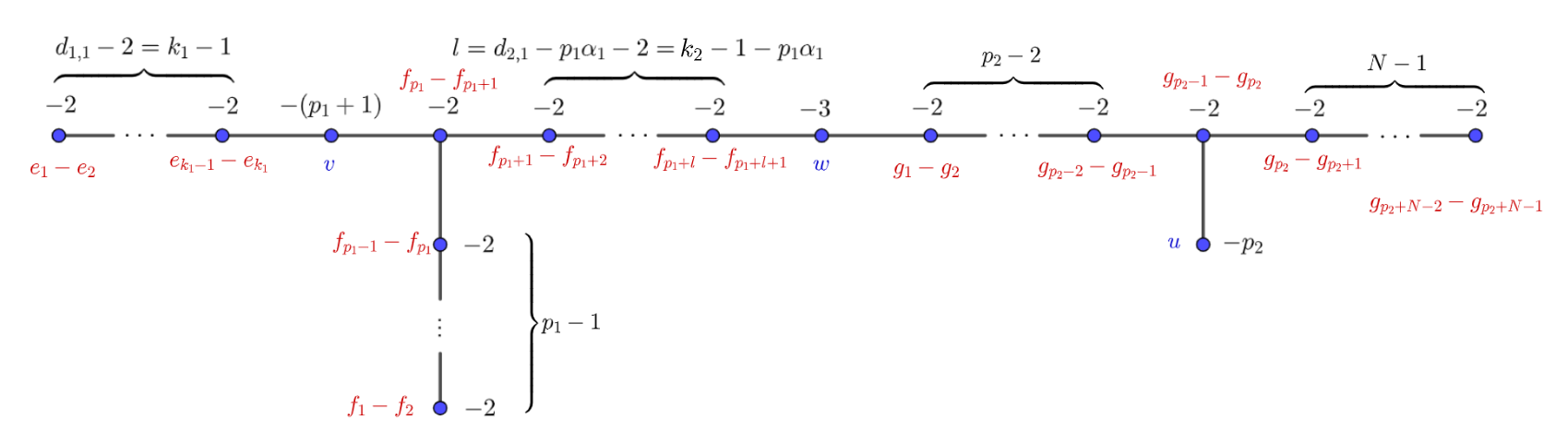}
\caption{Plumbing graph of $S_n^3(T(p_1,k_1p_1+1; p_2, k_2p_2+(p_2-1)))$ with $N \geq 2$ and $d_{2,1}=k_2+1 > p_1\alpha_1+1$.}
\label{alpha_2_cong_minus_1}
\end{figure}

\begin{proof}
We use the process described in Section \ref{negdefsec} to show that $S_n^3(T(p_1,k_1p_1+1; p_2, k_2p_2+(p_2-1)))$ bounds the plumbing in Figure \ref{alpha_2_cong_minus_1}, potentially with $k_1-1=0$. We do this by computing some negative continued fractions. First, we compute that $\alpha_1/p_1=[k_1+1,\underbrace{2,\dots,2}_{p_1-1}]^-$ and thus Leg 1 has weights $(\underbrace{-2,\dots ,-2}_{p_1-1})$ from bottom to top and by Riemenschneider's point rule, we get that Torso 1 has weights $(\underbrace{-2,\dots ,-2}_{k_1-1},-(p_1+1))$ from left to right. We have $\alpha_2/p_2=[k_2+1,p_2]^-$, leaving Leg 2 with one vertex of weight $-p_2$, whereas Riemenschneider's point rule gives a Torso 2 the weights $(\underbrace{-2, \dots , -2}_{k_2-1},-3,\underbrace{-2, \dots, -2}_{p_2-2})$ from left to right, but with the first $p_1\alpha_1$ $-2$'s cut off by the contraction described in Figure \ref{contractinggraph}. Note that if we only had algebraicity but not super-algebraicity, we would have the $-3$ in Node 1, adjacent to the vertex of weight $-(p_1+1)$.

By Corollary \ref{unionofminus2chains}, the embeddings of the $-2$-chains are forced as by Figure \ref{alpha_2_cong_minus_1}. Note that every $-2$-chain in the graph is extended at the end by some vertex not necessarily of weight $-2$, but we showed in the proof of Proposition \ref{minustwochains} that the embedding $(h_1-h_2,h_2-h_3,-h_1-h_2)$ can never be extended that way, which is why every $-2$-chain in Figure \ref{alpha_2_cong_minus_1} is embedded with one more basis vector than the number of vertices, no matter the length. We have three vertices left to embed, with embeddings $v$, $w$ and $u$, marked in blue in Figure \ref{alpha_2_cong_minus_1}. Note that there are at least 3 $g$'s, at least 3 $f$'s and either no or at least 2 $e$'s. For the obstruction of Proposition \ref{donaldson} to fail, we need to be able to embed the lattice of this graph in a lattice of the same rank as the number of vertices. If $k_1>1$, our partial embedding in red is already using as many basis vectors as we have access to. If $k_1=1$, we have access to one extra basis vector $h$.

There are three options for embedding $w$:
\begin{enumerate}
    \item $w=f_{p_1+l+1}+h-g_1$ implying that $k_1=1$,
    \item $w=f_{p_1+l+1}+g_2+g_3$ implying that $p_2=N=2$ and 
    \item $w=-g_1-f_1-f_2$ implying that $p_1=2$ and $l=0$.
\end{enumerate} We go through these cases one by one. \begin{enumerate}
    \item If $v$ is hit by $f_{p_1}$, then $v$ is hit by all vectors $f_1,\dots,f_{p_1}$. It must be hit by another vector, which must be $h$ since being hit by some $g$ means being hit by them all. However, that would mean $\langle v, w \rangle = \langle \pm h, h \rangle = \pm 1$, which is not the case. Thus $v$ is hit only by the $f$'s with index larger than $p_1$, which together with orthogonality to the all vectors but $u$ and the adjacent vertex, plus intersection 1 with the adjacent vertex, gives \[ v=(f_{p_1+1}+ \cdots + f_{p_1+l+1})-\lambda (g_1+\cdots +g_{p_2+N-1})-(1+\lambda)h \] for some $\lambda$.
    
    If $u$ is hit by $g_{p_2-1}$, it is also hit by all $g_1, \dots , g_{p_2-2}$. There is only space left for one basis vector, which must be $h$. The only possibility for orthogonality to $w$ becomes $u=-(g_1+\cdots+g_{p_2-1})-h$. Now $0=\langle v, u \rangle = - \lambda (p_2-1)-(1+\lambda)$. So $1=-\lambda p_2$, which is impossible since $p_2>1$. Thus $u$ is not hit by $g_{p_2-1}$, so it must be hit by $g_{p_2}$ and thus also $g_{p_2+1},\dots,g_{p_2+N-1}$. For orthogonality to the chains of $-2$'s and $w$, we get \[ u=g_{p_2}+\cdots + g_{p_2+N-1}+\kappa (f_1+\cdots + f_{p_1+l+1}-h).\] It remains to make sure that $\langle v, v \rangle = -(p_1+1)$, $\langle u, u \rangle = -p_2$ and $\langle v, u \rangle =0$. We get:  \[ \begin{cases}{}
     -(p_1+1) & =-(l+1)-\lambda^2(N+p_2-1)-(\lambda+1)^2\\
     -p_2 & =-N-\kappa^2(l+p_1+2)\\
     0 & =N\lambda-\kappa(l+1)-\kappa(\lambda+1)
\end{cases} \] Simplifying yields:
\begin{numcases}{}
    p_1=l+\lambda^2(N+p_2-1)+(\lambda+1)^2 \label{minuseq1}\\
    p_2=N+\kappa^2(l+p_1+2) \label{minuseq2}\\
    0=\lambda N - \kappa(l+\lambda+2)\label{minuseq3}
\end{numcases}Now, if $\kappa\neq 0$, Equation \eqref{minuseq2} implies that $p_2>p_1$. If that is so, Equation \eqref{minuseq1} gives that $\lambda=0$. Then Equation \eqref{minuseq3} gives that $\kappa=0$, which is a contradiction. Thus $\kappa=0$, implying though Equation \eqref{minuseq3} that $\lambda=0$, though Equation \eqref{minuseq2} that $p_2=N$, and through Equation \eqref{minuseq1} that $l=p_1-1$. This solution corresponds to \[(p_1,\alpha_1,p_2,\alpha_2,n)=(p_1,p_1+1, p_2, p_2(p_1+1)-1, p_2^2(p_1+1)^2)\] and is known to bound a rational homology ball by Theorem 1.3 in \cite{GALL}.

\item If $w=f_{p_1+l+1}+g_2+g_3$, then $u$ cannot be hit by $g_2$ as that would mean that $u=g_2+g_3$ and $\langle w, u \rangle=-2$. Thus $u$ is hit by $g_1$ and another basis vector. However, since the $e$'s and the $f$'s come in a package deal due to sitting in $-2$-chains orthogonal to $u$, none of these can hit $u$ and we have $k_1=1$ and $u=h-g_1$. As in the case above, if $f_{p_1}$ hits $v$, then $v=-(f_1+\cdots + f_{p_1}) \pm h$, but then $v$ intersects $u$. Thus $v=f_{p_1+1}+\cdots + f_{p_1+l+1}+\cdots$, through the $-2$-chain, but through orthogonality to $w$, $v$ must also be hit by one of $g_2$ and $g_3$. Through orthogonality to the tail vertex, the $g_2$ and $g_3$ must have the same coefficient in $v$, and $v$'s orthogonality to $w$ says that the coefficient of $f_{p_1+l+1}$ must be minus the double of the coefficient of $g_2$, which is impossible. Thus there are no embeddings with $w=f_{p_1+l+1}+g_2+g_3$.
\item Suppose $w=-g_1-f_1-f_2$, $p_1=2$ and $l=0$. Then $\langle v,v \rangle=-3$, so if $v$ is hit by $f_2$, it is hit by $f_1$ too with equal coefficient, and orthogonality to $w$ is impossible. Thus $v$ is hit by $f_3$ and two other basis vectors. Since there are at least 3 $g$'s, these must be $e$'s. We get $k_1=3$ and $v=-e_1-e_2+f_3$. Now $u$ cannot be hit by $g_{p_2-1}$ as it would be hit by all $g_1,\dots,g_{p_2-1}$ and there would be only one space left for another basis vector, whereas the $e$'s and $f$'s come in packages of 3. Thus $u=g_{p_2}+\cdots + g_{p_2+N-1}+\lambda(e_1+e_2+e_3)+\kappa(f_1+f_2+f_3)$. Orthogonality of $u$ to $w$ gives $\kappa=0$ and orthogonality of $u$ and $v$ then gives $\lambda=0$. This solution corresponds to $(p_1,\alpha_1,p_2,\alpha_2,n)=(2,7, p_2, 16p_2-1, 16p_2^2)$, which bounds a rational homology ball by Theorem 1.3 in \cite{GALL}.
\end{enumerate}
\end{proof}

Now we consider the case where $\alpha_2 \equiv 1 \pmod{p_2}$ instead. The reader may note that the only tuples $(p_1, \alpha_1; p_2, \alpha_2; n)$ for which we find embeddings in this case have $p_2=2$, in which case $\alpha_2 \equiv -1 \pmod{p_2}$ if and only if $\alpha_2 \equiv 1 \pmod{p_2}$.

\begin{prop}
\label{prop1}
Let $\alpha_1 \equiv 1 \pmod{p_1}$, $\alpha_2 \equiv 1 \pmod{p_2}$, $\alpha_2/p_2 > p_1\alpha_1+1$ and $n \geq 2+p_2\alpha_2$. Then the rational homology 3-sphere $S^3_n(T(p_1,\alpha_1; p_2, \alpha_2))$ bounds a rational homology 4-ball if and only if \[(p_1, \alpha_1; p_2, \alpha_2; n)=(2,7;2,31;64).\] or \[(p_1, \alpha_1; p_2, \alpha_2; n)=(p_1,p_1+1; 2, 2(p_1+1)^2-1; 4(p_1+1)^2).\]
\end{prop}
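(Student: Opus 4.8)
The strategy mirrors the proof of Proposition \ref{propminus1}. First I would write down the negative definite plumbing graph for $S^3_n(T(p_1,\alpha_1;p_2,\alpha_2))$ with $\alpha_2\equiv 1\pmod{p_2}$ and $N\geq 2$, using the recipe of Section \ref{negdefsec}. Computing the relevant continued fractions: with $\alpha_1=k_1p_1+1$ one has $\alpha_1/p_1=[k_1+1,\underbrace{2,\dots,2}_{p_1-1}]^-$, so Leg 1 is a chain of $p_1-1$ $(-2)$'s and (by Riemenschneider) Torso 1 reads $(\underbrace{-2,\dots,-2}_{k_1-1},-(p_1+1))$. For the second pair, $\alpha_2=k_2p_2+1$ gives $\alpha_2/p_2=[k_2+1,2,\dots]^-$-type data; the key difference from the $\alpha_2\equiv-1$ case is that Leg 2 now has a different shape (it is not a single $(-p_2)$-vertex but a chain, and correspondingly Torso 2's non-$(-2)$ vertex sits in a different spot), and after the contraction of Figure \ref{contractinggraph} cuts off the leading $p_1\alpha_1$ twos, I get a graph of the same overall shape as Figure \ref{alpha_2_cong_minus_1} but with the ``heavy" vertices relocated. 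I would draw this graph and label the handful of vertices whose embedding is not forced.

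\textbf{Forcing the embedding.} By Corollary \ref{unionofminus2chains}, every maximal $(-2)$-chain embeds in the standard ``staircase" way with one extra basis vector (the exotic embedding $(h_1-h_2,h_2-h_3,-h_1-h_2)$ is ruled out since each chain is extended by a further vertex, exactly as argued in Proposition \ref{propminus1}). This pins down all but two or three vertices; call their images $v$ (the weight $-(p_1+1)$ vertex or its neighbour), $w$, $u$, etc. Then, as in the previous proposition, for the embedding to exist in the minimal rank there is essentially no slack in the basis vectors unless $k_1=1$ (which frees up one extra basis vector $h$), so I would split into the cases $k_1>1$ and $k_1=1$, and within those enumerate the finitely many ways the constrained vertices can meet the staircases. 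Each choice produces a small system of Diophantine equations in $p_1,p_2,N,$ and the coefficients $\lambda,\kappa,\dots$ (analogous to \eqref{minuseq1}--\eqref{minuseq3}); solving these should leave only the two claimed tuples, $(2,7;2,31;64)$ and $(p_1,p_1+1;2,2p_1+1;4(p_1+1)^2)$, plus I would invoke Theorem 1.3 of \cite{GALL} to confirm these genuinely bound rational homology balls. The appearance of $p_2=2$ in both surviving cases should fall out of the equations forcing $p_2=N$ together with a parity/size constraint.

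\textbf{Main obstacle.} The delicate point is bookkeeping the case analysis correctly: because $\alpha_2\equiv 1\pmod{p_2}$ places the $-3$ (or other heavy vertex) of Torso 2 in a different position relative to Node 1 and the tail than in the $\alpha_2\equiv -1$ case, the list of admissible embeddings of $w$ changes, and one must be careful that no branch is overlooked — in particular the degenerate possibilities $k_1=1$, $p_1=2$, and $N=2$ interact and can each open up an extra basis vector or an extra short chain. I would organise the argument by first settling which basis vectors can hit the ``free" vertex $v$ of weight $-(p_1+1)$ (using that being hit by one $f$ in a chain forces being hit by all of them, and that $\langle v,v\rangle=-(p_1+1)$ bounds how many distinct basis vectors can appear), then propagate the consequences to $w$ and $u$ via orthogonality, exactly as in Proposition \ref{propminus1}. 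The calculations themselves are routine once the graph and the case list are correct; the risk is entirely in the combinatorial enumeration.
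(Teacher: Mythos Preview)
Your plan is essentially the paper's proof, and the case split $k_1>1$ versus $k_1=1$ followed by a small Diophantine system is exactly what happens. Two corrections will make the bookkeeping cleaner and match the paper's argument.

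First, with $\alpha_2\equiv 1\pmod{p_2}$ the shape of Body 2 is symmetric to Body 1: Leg 2 is a chain of $p_2-1$ vertices of weight $-2$, and Torso 2 ends in a single vertex of weight $-(p_2+1)$ (not a $-3$). Consequently, after forcing the staircase embeddings there are exactly \emph{two} free vertices $v$ and $w$ (of weights $-(p_1+1)$ and $-(p_2+1)$), not three, and when $k_1=1$ the basis-vector count equals the vertex count on the nose --- there is no spare basis vector $h$ this time. This actually makes the analysis shorter than in Proposition \ref{propminus1}.

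Second, the paper disposes of $p_2=2$ before any lattice analysis by observing that $\alpha_2\equiv 1\equiv -1\pmod 2$, so those tuples are already handled by Proposition \ref{propminus1}; the two families in the statement are precisely the ones inherited from there. This reduction lets one assume $p_2\geq 3$, which in particular rules out the exotic length-3 embedding of any $(-2)$-chain in the tail/Leg-2/Node-2 region and forces the standard red staircase everywhere. With $p_2\geq 3$ the three equations (analogous to your \eqref{minuseq1}--\eqref{minuseq3}) have no solution at all, so nothing survives in this branch. If you instead try to extract $p_2=2$ directly from the equations as you suggest, you will have to revisit the claim that the staircase embedding is forced, since $N=p_2=2$ can produce a length-3 $(-2)$-chain admitting the exotic embedding; the reduction sidesteps this.
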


\begin{figure}
\centering
\includegraphics[width=\textwidth]{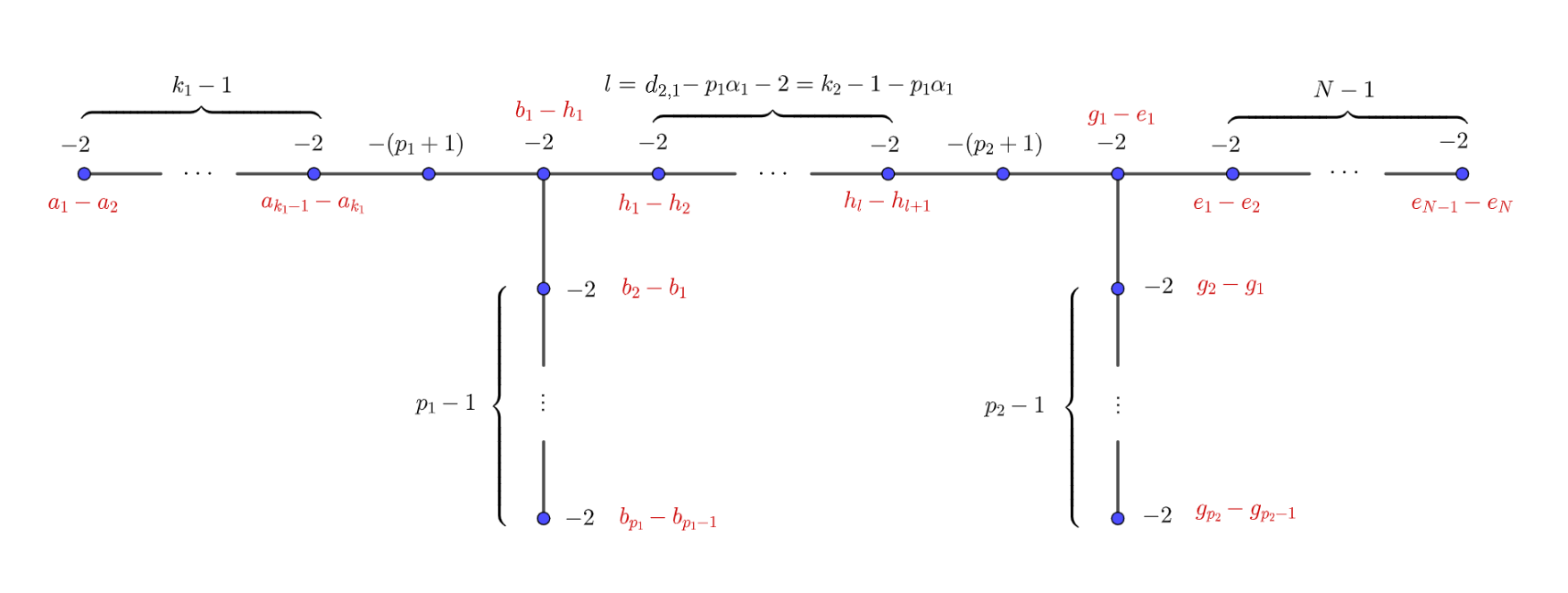}
\caption{Plumbing graph of $S_n^3(T(p_1,k_1p_1+1; p_2, k_2p_2+1))$ with $N \geq 2$ and $d_{2,1}=k_2+1 > p_1\alpha_1+1$.}
\label{alpha_2_cong_1}
\end{figure}

\begin{figure}
\centering
\includegraphics[width=\textwidth]{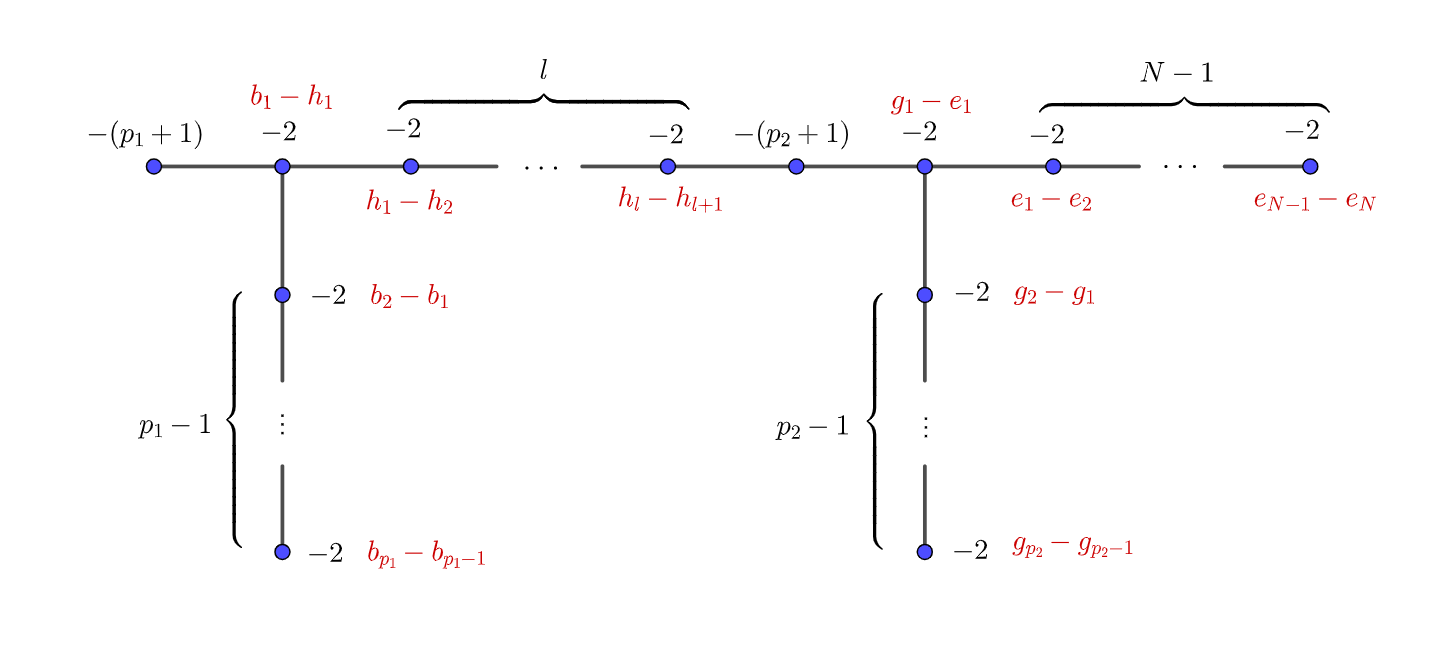}
\caption{Plumbing graph of $S_n^3(T(p_1,p_1+1; p_2, k_2p_2+1))$ with $N \geq 2$ and $d_{2,1}=k_2+1 > p_1\alpha_1+1$.}
\label{alpha_2_cong_1_and_p_1_plus_1}
\end{figure}

\begin{proof}
Since $\alpha_i/p_i=[k_i+1,\underbrace{2,\dots,2}_{p_i-1}]^-$, the recipe in Section \ref{negdefsec} gives us a plumbing graph with $(p_i-1)$ $-2$'s in each leg. Riemenschneider's point rule gives \[ [c_{i,2},c_{i,3},\dots, c_{i,s_i}]^{-}=[\underbrace{2,\dots,2}_{k_i-1},p_i+1]^- \] and the contraction in Figure \ref{contractinggraph} shortens the Torso by $(p_1\alpha_1-1)$ $-2$'s, leaving us with the negative definite graph in Figure \ref{alpha_2_cong_1}. By Proposition \ref{minustwochains}, a sequence of $-2$'s has only one possible embedding up to signs and renaming of elements, unless it has length 3, in which case there is an embedding of the form $(v_1-v_2,v_2+v_3,-v_1-v_2)$, but this embedding cannot be extended at its ends to a longer chain. Thus, unless $N=p_2=2$, the embedding of the $-2$-chains in Figure \ref{alpha_2_cong_1} must be the one in red. When $p_2=2$, $\alpha_2 \equiv 1 \pmod{p_2}$ if and only if $\alpha_2 \equiv -1 \pmod{p_2}$, a case that we have already dealt with completely. Hence we assume that $p_2>2$ and the red partial embedding is forced.

If $k_1-1>0$, we have already used more basis vectors than we have vertices in the graph, and thus no embedding satisfying the requirements of Proposition \ref{donaldson} can exist. If $k_1=1$, our graph looks like in Figure \ref{alpha_2_cong_1_and_p_1_plus_1}. We have two vertices left to embed: $v$ in Torso 1 and $w$ in Torso 2. It is easy to see that $v$ cannot be hit by $b_1$, since that would force $v=-(b_1+\cdots +b_{p_2})+u$ for some basis vector $u$, but there is no option for what $u$ could be. Thus $v=h_1+\cdots +h_{l+1}+\lambda(e_1+\cdots +e_{N}+g_1+\cdots + g_{p_2})$ for some $\lambda$. Now if $w$ is hit by $g_1$, then $w=-(g_1+\cdots+g_{p_2})+h_{l+1})$ by orthogonality to Leg 2 and having to hook on to the $-2$-chain in Torso 2. But then $0=\langle w, v \rangle =-1+\lambda p_2$, which is impossible. Hence, our embedding must be of the form described in Figure \ref{on_T(n,n+1)}.

\begin{figure}[h]
\centering
\includegraphics[width=\textwidth]{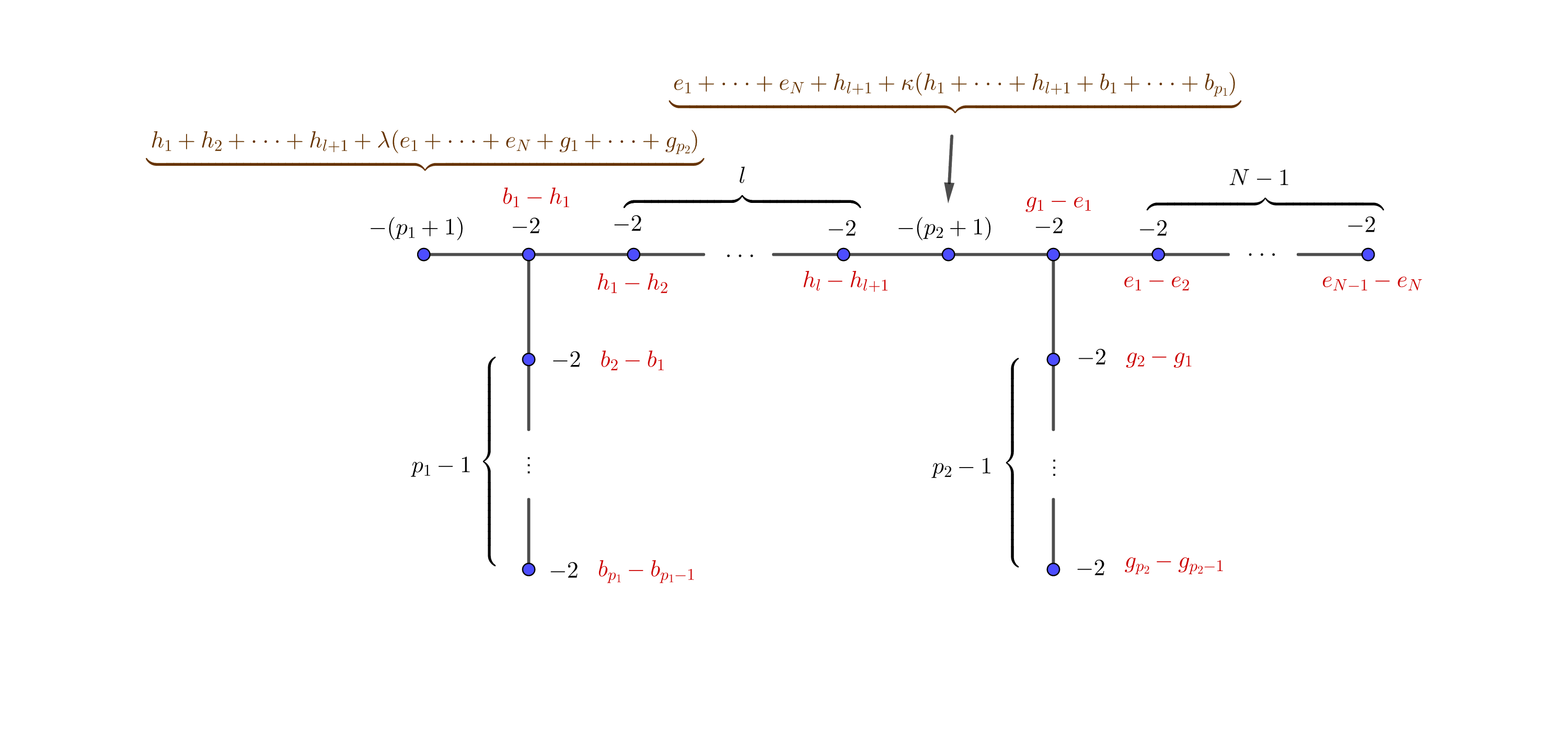}
\caption{Embedding of $S_n^3(T(p_1,p_1+1;p_2,kp_2+1))$}
\label{on_T(n,n+1)}
\end{figure}

There are 3 equations left to satisfy in order to obtain an embedding, determined by the relationship between the vertices of weight $-(p_1+1)$ and $-(p_2+1)$, and their relationship to themselves. These equations are:
 \[ \begin{cases}{}
    & -(p_1+1)=-(l+1)-\lambda^2(N+p_2)\\
    & -(p_2+1)=-N-(\kappa+1)^2-\kappa^2(l+p_1)\\
    & 0=-\kappa l - (\kappa+1)-\lambda N
\end{cases} \]
Simplifying them yields:
\begin{numcases}{}
    p_1=l+\lambda^2(N+p_2) \label{eq1}\\
    p_2=(N-1)+(\kappa+1)^2+\kappa^2(l+p_1) \label{eq2}\\
    0=\kappa l + (\kappa+1)+\lambda N \label{eq3}
\end{numcases}
Suppose $e_1$ shows up again in Body 1, meaning that $\lambda^2 \geq 1$. By Equation \eqref{eq1}, $p_1>p_2$. Now, if $\kappa^2\geq 1$, then by Equation \eqref{eq2}, $p_2>p_1$, which is a contradiction. However, if $\kappa=0$, then Equation \eqref{eq3} degenerates into $\lambda N = -1$, which contradicts that $N \geq 2$. If $e_1$ does not show up again in Body 1, meaning that $\lambda=0$, then Equation \eqref{eq1} gives that $l=p_1$ and Equation \eqref{eq3} gives that $-1=\kappa(l+1)=\kappa(p_1+1)$, which is impossible. Thus, the only embeddable cases are when $p_2=2$, in which case we get the cases coming from $\alpha_2 \equiv -1 \pmod{p_2}$.
\end{proof}

\begin{prop}
Let $\alpha_1 \equiv 1 \pmod{p_1}$ and $\alpha_2 \equiv \pm 1 \pmod{p_2}$. Also let $\lceil \alpha_2/p_2 \rceil =p_1 \alpha_1 +1$ and $n \geq 2+p_2\alpha_2$. Then $S_n^3(T(p_1,\alpha_1;p_2,\alpha_2))$ does not bound a rational homology 4-ball.
\end{prop}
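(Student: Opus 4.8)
The plan is to run the lattice-embedding analysis of Propositions \ref{propminus1} and \ref{prop1} in this borderline regime, where essentially the only structural change to the plumbing graph is that the leading block of $-2$'s in Torso $2$ now has length exactly $p_1\alpha_1-1$ rather than at least $p_1\alpha_1$, so that the contraction of Figure \ref{contractinggraph} consumes it entirely. Concretely, I would first redo the computation of Section \ref{negdefsec}: as in the two preceding proofs, $\alpha_1/p_1=[k_1+1,2,\dots,2]^-$, while $\alpha_2/p_2=[k_2+1,p_2]^-$ or $[k_2+1,2,\dots,2]^-$ according as $\alpha_2\equiv-1$ or $\alpha_2\equiv 1\pmod{p_2}$, so Riemenschneider's rule gives Torso $2$ a leading block of $k_2-1=p_1\alpha_1-1$ twos followed by a vertex of weight $-3$ (resp.\ $-(p_2+1)$); after the contraction this leading block disappears, and Node $1$ acquires weight $-3$ (resp.\ $-(p_2+1)$) and becomes adjacent to the vertex of weight $-(p_1+1)$ at the end of Torso $1$. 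When $p_2=2$ the two congruence conditions coincide and $-3=-(p_2+1)$, so it is enough to treat $\alpha_2\equiv-1$; for $p_2>2$ I would run the two congruences in parallel, writing $-w_0$ for the weight of Node $1$ (so $w_0=3$ or $w_0=p_2+1$).

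With this graph in hand, Corollary \ref{unionofminus2chains} forces the embeddings of all the $-2$-chains (both legs, or Leg $2$ being a single vertex of weight $-p_2$ when $\alpha_2\equiv-1$; the tail; and the length-$(k_1-1)$ chain of Torso $1$), and, exactly as in the proof of Proposition \ref{minustwochains}, the exceptional embedding $(h_1-h_2,h_2-h_3,-h_1-h_2)$ cannot occur as a sub-chain since every such chain is extended at an end by a vertex of weight other than $-2$; hence a chain of length $m$ costs $m+1$ basis vectors, the forced partial embedding uses all available basis vectors when $k_1>1$, and it leaves exactly one spare basis vector when $k_1=1$. This reduces the problem to embedding a small number of special vertices — two or three, as in Propositions \ref{propminus1} and \ref{prop1} — among them the vertex of weight $-(p_1+1)$, with image $v$, and Node $1$, with image $w$. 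I would then carry out the same case split on the possible embeddings of $w$ as in the proof of Proposition \ref{propminus1}, the decisive difference being that $v$ and $w$ are now adjacent, so the constraint is $\langle v,w\rangle_{-\Id}=1$ instead of $0$. For $p_2>2$ in the case $\alpha_2\equiv1$ the weight $-(p_2+1)\le-4$ shortens the list of candidate embeddings of $w$ further, and one may also invoke the weakly chained diagonally dominant observation at the end of Section \ref{negdefsec}; this sub-case should be the easiest.

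In each surviving branch, translating $\langle v,v\rangle_{-\Id}=-(p_1+1)$, $\langle w,w\rangle_{-\Id}=-w_0$ and the new relation $\langle v,w\rangle_{-\Id}=1$ into a Diophantine system — the analogue of the systems \eqref{minuseq1}--\eqref{minuseq3} and \eqref{eq1}--\eqref{eq3}, but now with an extra cross term coming from the adjacency — I expect every branch to be killed by an impossibility of a type already encountered, such as $p_1>p_2$ and $p_2>p_1$ being forced simultaneously, or $N\le1$, or $p_2\le2$ contrary to assumption; hence no lattice embedding into $(\Z^{\rk H_2(X)},-\Id)$ exists and Proposition \ref{donaldson} gives the claim. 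Note that the exceptional families (i) and (ii) of Theorem \ref{mainthm} both satisfy $\alpha_2/p_2>p_1\alpha_1+1$ and therefore lie strictly inside the super-algebraic regime, never in the present one, so it is entirely consistent that no tuple survives here. The main obstacle is the very first step: verifying that the contraction of Figure \ref{contractinggraph}, applied when Torso $2$ begins with exactly $p_1\alpha_1-1$ twos rather than with at least $p_1\alpha_1$, genuinely terminates with Node $1$ adjacent to the vertex of weight $-(p_1+1)$ and carrying the weight claimed above — the interaction of the final blow-down and the $0$-absorption with the vertex of weight $-3$ (resp.\ $-(p_2+1)$) must be tracked carefully. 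Once that graph is pinned down, the lattice analysis is a bookkeeping exercise running closely parallel to the two preceding proofs, the extra adjacency term only making the Diophantine systems more rigid.
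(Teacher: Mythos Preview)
Your plan is correct and follows the paper's own proof essentially line for line: the paper too observes that in this borderline regime the contraction pushes the weight $-3$ (resp.\ $-(p_2+1)$) onto Node~$1$, so that $v$ and $w$ become adjacent, and then reruns the case split on $w$ from Propositions~\ref{propminus1} and~\ref{prop1} with the single change $\langle v,w\rangle=1$, killing every branch by short Diophantine contradictions of exactly the types you anticipate. Two small corrections to keep in mind when you execute the plan: the pre-contraction count of leading $-2$'s in Torso~$2$ is $p_1\alpha_1$ in the $\alpha_2\equiv-1$ case (not $p_1\alpha_1-1$), and in the borderline $\alpha_2\equiv-1$ case option~(iii) for $w$ forces $p_1=3$ rather than $p_1=2$, since the $-2$-chain feeding $w$ is now Leg~$1$ alone; also, in the borderline $\alpha_2\equiv1$ case with $k_1=1$ there is \emph{no} spare basis vector $h$ (the vertex count matches the basis count exactly), which is precisely what makes that sub-case collapse so quickly.
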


\begin{figure}[h]
\centering
\includegraphics[width=\textwidth]{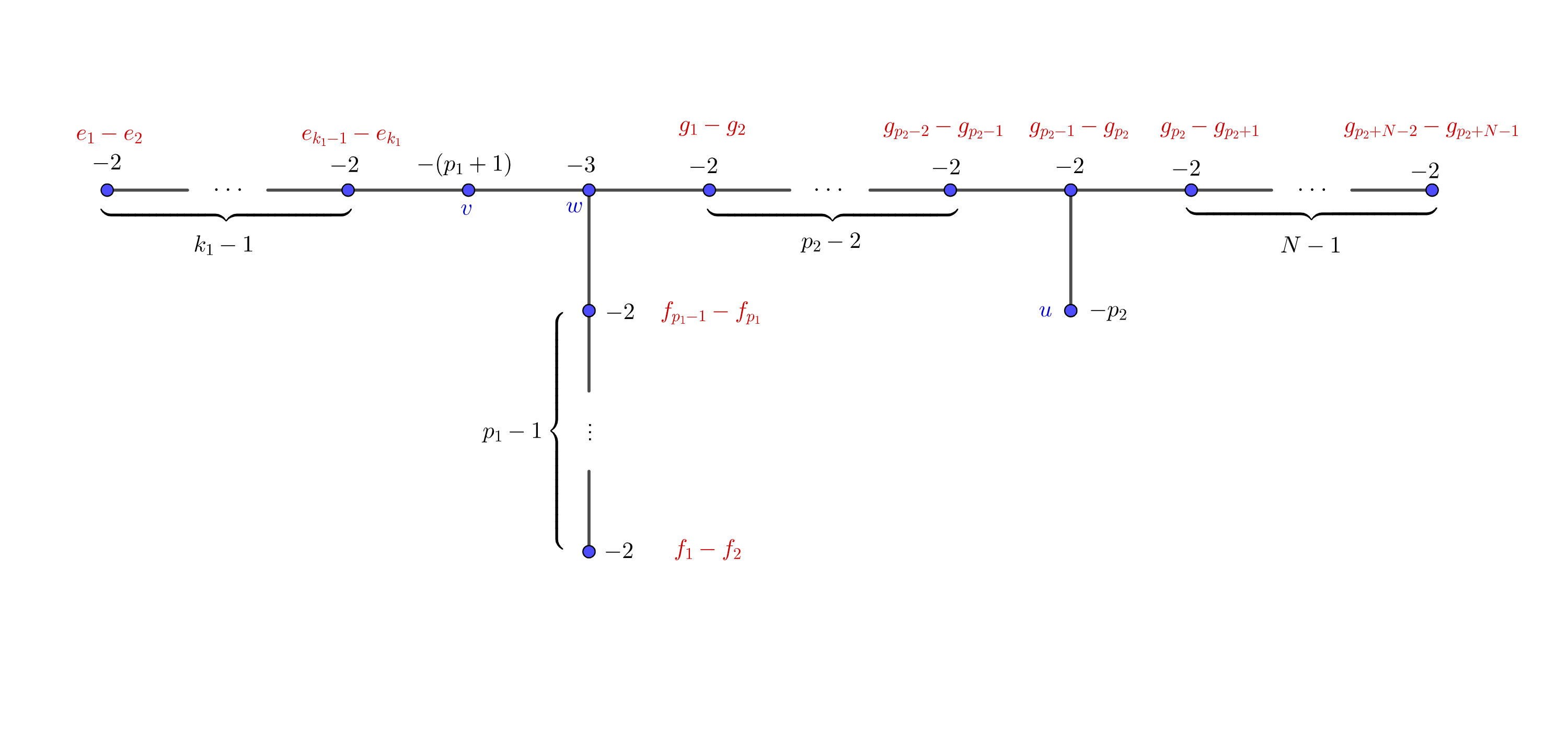}
\caption{Embedding of $S_n^3(T(p_1,k_1p_1+1;p_2,(p_1\alpha_1+1)p_2-1)$}
\label{algebraic1}
\end{figure}

\begin{figure}[h]
\centering
\includegraphics[width=\textwidth]{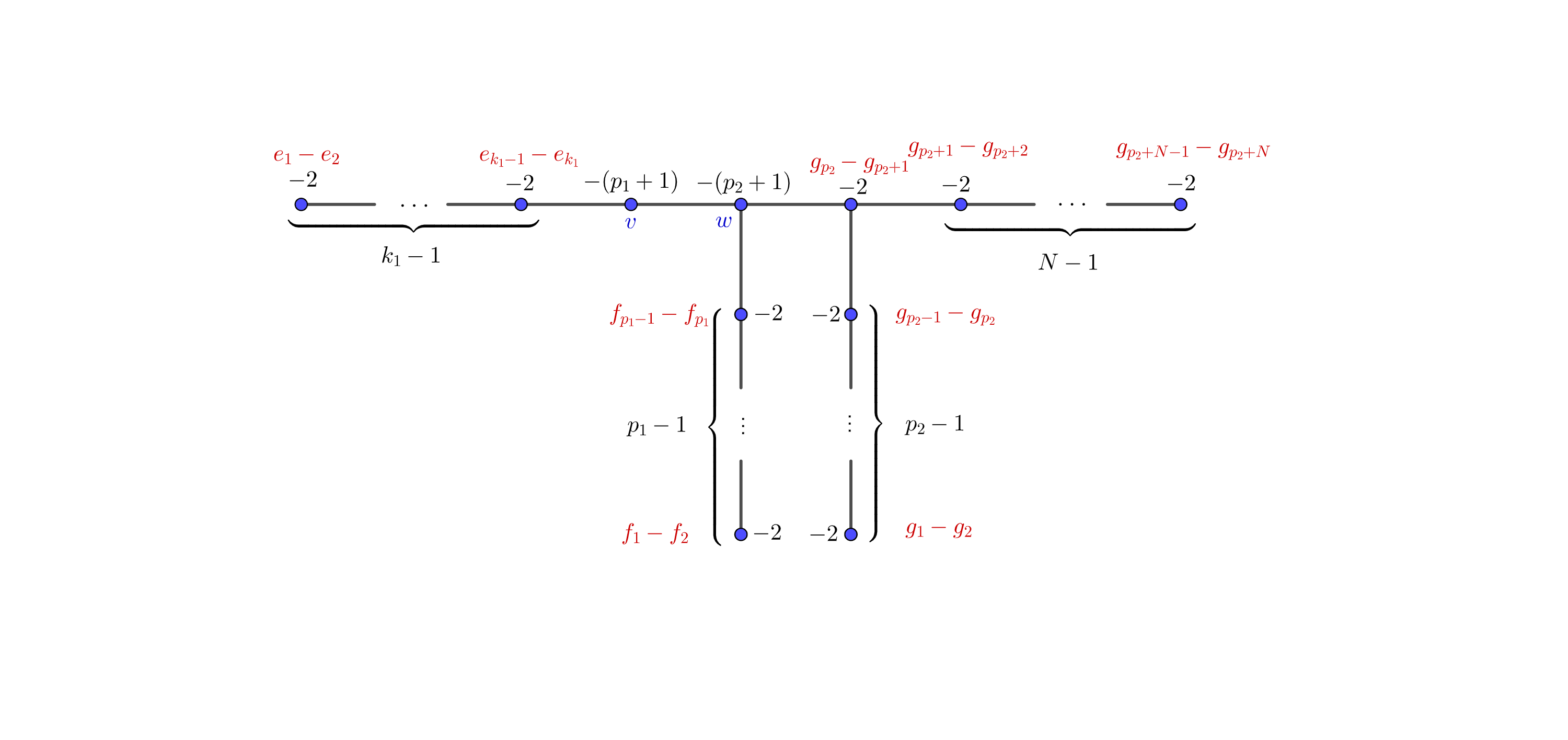}
\caption{Embedding of $S_n^3(T(p_1,k_1p_1+1;p_2,p_1\alpha_1p_2+1)$}
\label{algebraic2}
\end{figure}

\begin{proof}
We start by the case where $\alpha_2 \equiv -1 \pmod{p_2}$, in which case the plumbing graph of $S_n^3(T(p_1,\alpha_1;p_2,\alpha_2))$ looks as in Figure \ref{algebraic1}. The proof is similar to the one of Proposition \ref{propminus1}, with the main difference being that here $v$ and $w$ are adjacent. As before, there are three options for $w$, namely \begin{enumerate}
    \item $w=-g_1+f_{p_1}+h$, and thus $k_1=1$,
    \item $w=g_2+g_3+f_{p_1}$, and thus $p_2=2=N$, or
    \item $w=-f_1-f_2-g_1$ in which case $p_1=3$.
\end{enumerate}
We quickly go through these cases one by one: \begin{enumerate}
    \item If $v$ is hit by $f_{p_1}$, then $v=\pm (f_1+\cdots +f_{p_1})\pm h$, by orthogonality to Leg 1 and the only way to fill out a space of 1. However, this cannot have intersection 1 with $w$. Thus $v=-h+\lambda(g_1+\cdots+g_{p_2+N-1}+h)$. Now, either $u=-(g_1+\cdots+g_{p_2-1})- h$, which cannot be orthogonal to $v$ since $0=\langle v, u \rangle = \lambda p_2-1$ has no solutions, or $u=g_{p_2}+\cdots+g_{p_2+N-1}+\kappa h + \mu (f_1+\cdots +f_{p_1})$. By orthogonality to $w$, we get $\kappa=-\mu$. Orthogonality to $v$ gives $\langle v, u \rangle = (\lambda-1)\mu-\lambda N = 0$. Now, it remains to solve the system \begin{numcases}{}
    p_1+1=-\langle v, v \rangle = \lambda^2(p_2+N-1)+(\lambda-1)^2 \label{eq7}\\
    p_2=-\langle u, u \rangle = N+\mu^2+\mu^2p_1 \label{eq8}\\
    0=\langle v, u \rangle = (\lambda-1)\mu-\lambda N. \label{eq9}
\end{numcases} Note that $\lambda=0$ implies $p_2=-2$, which is not admissible. Thus, by Equation \eqref{eq7}, $p_1\geq p_2$. If $\mu \neq 0$, then by Equation \eqref{eq8}, $p_2>p_1$, which gives a contradiction. If $\mu=0$, then by Equation \eqref{eq9}, $\lambda=0$ or $N=0$, both of which are impossible. Thus we have no solutions in this case.
\item If $w=g_2+g_3+f_{p_1}$, then we have no options for $u$, which here satisfies $-2=\langle u, u \rangle$. Either $u$ is hit by $g_2$ and thus $u=g_2+g_3$, which is not orthogonal to $w$, or $u=-g_1+h$. Then $k_1=1$. Now either $v$ is hit by the $f$'s, implying $v=\pm(f_1+\cdots +f_{p_1})\pm h$, which is not orthogonal to $u$, or $v=\lambda(h+g_1+g_2+g_{3})$, which cannot have intersection 1 with $w$.
\item If $w=-f_1-f_2-g_1$, we have $p_1=3$ and $-4=\langle v, v \rangle$. If $v$ is hit by $f$'s, we have $v=\pm(f_1+f_2+f_3)\pm h$ or $v=\pm(f_1+f_2+f_3)+e_{k_1}$, none of which can have intersection $1$ with $w$. Thus $(g_1+\cdots + g_{p_2+N-1})$ is included in $v$ and since $p_2+N-1\geq 3$ we have four options: \begin{enumerate}
    \item We could have $k_1\geq 2$, $p_2=N=2$ and $v=g_1+g_2+g_3+e_k$. Then either $u=g_2+g_3$ which is not orthogonal to $v$, or $u$ includes $-g_1$, but we cannot fill in the gap of 1.
    \item A similar case is $k_1=1$, $p_2=N=2$ and $v=g_1+g_2+g_3+h$. Then $u=g_2+g_3$ is still impossible, and $u=h-g_1$ is not orthogonal to $w$.
    \item Alternatively, we could have $p_2=2$, $N=3$ and $v=g_1+\cdots + g_4$. Then either $u$ is hit by $g_2$ and includes $g_2+g_3+g_4$ which has lower self-intersection than $-2$, or $u=-g_1+h$, $k_1=1$ and $u$ is not orthogonal to $v$.
    \item Lastly, we could have $p_2=3$, $N=2$ and $v=g_1+g_2+g_3+g_4$. Then either $u=g_3+g_4+h$, which is not orthogonal to $v$, or $u=-g_1-g_2+h$, which is still not orthogonal to $v$.
\end{enumerate}
\end{enumerate}
Thus there are no embeddings if $\alpha_2 \equiv -1 \pmod{p_2}$.

If $\alpha_2 \equiv 1 \pmod{p_2}$, $S_n^3(T(p_1,\alpha_1;p_2, \alpha_2))$ bounds the plumbing graph in Figure \ref{algebraic2}. Once again, the main difference with Proposition \ref{prop1} is that $v$ and $w$ are adjacent. We assume that $p_2\geq 3$ since the other case fits into the case of $\alpha_2 \equiv -1 \pmod{p_2}$. Proposition \ref{minustwochains} forces the partial embedding in red. If $k_1>1$, we have used too many vectors already, so Proposition \ref{donaldson} obstructs the existence of a rational homology 4-ball bounding $S_n^3(T(p_1,\alpha_1;p_2, \alpha_2))$ in this case. We assume $k_1=1$. We have no basis vectors available apart from the $f$'s and $g$'s.

No $f$ can hit $v$ since then all of them would, which would leave us with a gap of 1 that we cannot fill in. Thus $v=\lambda(g_1+\cdots+g_{p_2+N})$. If $w$ is hit by $g_{p_2}$, then $w=f_{p_1}-(g_1+\cdots+g_{p_2})$, which cannot have intersection $1$ with $v$. Thus $w=g_{p_2+1}+\cdots+g_{p_2+N}+\kappa(f_1+\cdots+f_{p_1})$, whose intersection with $v$ is $-\lambda N$, which cannot equal 1. Hence, there are no embeddings when  $\alpha_2 \equiv 1 \pmod{p_2}$ either.
\end{proof}

\bibliographystyle{acm}
\bibliography{lowdim} 

\begin{thebibliography}{10}

\bibitem{aceto20}
{\sc Aceto, P.}
\newblock Rational homology cobordisms of plumbed manifolds.
\newblock {\em Algebr. Geom. Topol. 20}, 3 (2020), 1073--1126.

\bibitem{acetogolla}
{\sc Aceto, P., and Golla, M.}
\newblock Dehn surgeries and rational homology balls.
\newblock {\em Algebr. Geom. Topol. 17}, 1 (2017), 487--527.

\bibitem{GALL}
{\sc {Aceto}, P., {Golla}, M., {Larson}, K., and {Lecuona}, A.~G.}
\newblock {Surgeries on torus knots, rational balls, and cabling}.
\newblock {\em arXiv e-prints\/} (Aug. 2020), arXiv:2008.06760.

\bibitem{bodnar}
{\sc Bodn\'{a}r, J.}
\newblock Classification of rational unicuspidal curves with two {N}ewton
  pairs.
\newblock {\em Acta Math. Hungar. 148}, 2 (2016), 294--299.

\bibitem{casson-harer}
{\sc Casson, A.~J., and Harer, J.~L.}
\newblock Some homology lens spaces which bound rational homology balls.
\newblock {\em Pacific J. Math. 96}, 1 (1981), 23--36.

\bibitem{donaldsonthm}
{\sc Donaldson, S.~K.}
\newblock An application of gauge theory to four dimensional topology.
\newblock {\em J. Differential Geom. 18}, 2 (1983), 279--315.

\bibitem{eisenbudneumann}
{\sc Eisenbud, D., and Neumann, W.}
\newblock {\em Three-dimensional link theory and invariants of plane curve
  singularities}, vol.~110 of {\em Annals of Mathematics Studies}.
\newblock Princeton University Press, Princeton, NJ, 1985.

\bibitem{cameron}
{\sc Gordon, C.~M.}
\newblock Dehn surgery and satellite knots.
\newblock {\em Trans. Amer. Math. Soc. 275}, 2 (1983), 687--708.

\bibitem{kirbylist}
{\sc Kirby, R.}
\newblock Problems in low-dimensional topology.
\newblock In {\em Geometric topology\/} (1997), AMS/IP Stud. Adv.Math., vol. 2,
  Amer. Math. Soc., Providence, R.I., pp.~35--473.

\bibitem{lecuonamontesinos}
{\sc Lecuona, A.~G.}
\newblock On the slice-ribbon conjecture for {M}ontesinos knots.
\newblock {\em Trans. Amer. Math. Soc. 364}, 1 (2012), 233--285.

\bibitem{lecuonalisca11}
{\sc Lecuona, A.~G., and Lisca, P.}
\newblock Stein fillable {S}eifert fibered 3-manifolds.
\newblock {\em Algebr. Geom. Topol. 11}, 2 (2011), 625--642.

\bibitem{liscasingle07}
{\sc Lisca, P.}
\newblock Lens spaces, rational balls and the ribbon conjecture.
\newblock {\em Geom. Topol. 11\/} (2007), 429--472.

\bibitem{liscamultiple07}
{\sc Lisca, P.}
\newblock Sums of lens spaces bounding rational balls.
\newblock {\em Algebr. Geom. Topol. 7\/} (2007), 2141--2164.

\bibitem{neumannplumbingcalculus}
{\sc Neumann, W.~D.}
\newblock A calculus for plumbing applied to the topology of complex surface
  singularities and degenerating complex curves.
\newblock {\em Trans. Amer. Math. Soc. 268}, 2 (1981), 299--344.

\bibitem{neumann89}
{\sc Neumann, W.~D.}
\newblock On bilinear forms represented by trees.
\newblock {\em Bull. Austral. Math. Soc. 40}, 2 (1989), 303--321.

\bibitem{owensstrle12}
{\sc Owens, B., and Strle, S.}
\newblock Dehn surgeries and negative-definite four-manifolds.
\newblock {\em Selecta Math. (N.S.) 18}, 4 (2012), 839--854.

\bibitem{riemenschneider}
{\sc Riemenschneider, O.}
\newblock Deformationen von {Q}uotientensingularit\"{a}ten (nach zyklischen
  {G}ruppen).
\newblock {\em Math. Ann. 209\/} (1974), 211--248.

\bibitem{simone2020classification}
{\sc Simone, J.}
\newblock Classification of torus bundles that bound rational homology circles.
\newblock {\em arXiv preprint arXiv:2006.14986\/} (2020).

\bibitem{simone2021using}
{\sc Simone, J.}
\newblock Using rational homology circles to construct rational homology balls.
\newblock {\em Topology and its Applications 291\/} (2021), 107626.

\end{thebibliography}

\end{document}